\documentclass{amsart}

\usepackage{amsmath,amsfonts,amsthm,latexsym}
\usepackage{mathrsfs, textcomp}
\usepackage{tikz-cd}
\usetikzlibrary{arrows, matrix}
\setlength{\textwidth}{140mm}

\def\ifif {if and only if\ \ }

\theoremstyle{definition}

\numberwithin{equation}{section}


\usepackage{amssymb}
\usepackage{amscd}
\usepackage{eqlist, color}
\usepackage{mathrsfs, textcomp}
\usepackage{color, eqlist}
\usepackage{graphicx}
\usepackage{bm}
\usepackage{amsmath,amsfonts,amsthm,latexsym}
\usepackage[all]{xy}
 
\DeclareMathOperator{\Max}{Max}

\pagestyle{myheadings}
\theoremstyle{definition}\newtheorem{thm}{Theorem}[section]
\theoremstyle{definition}\newtheorem{cor}[thm]{Corollary}
\theoremstyle{definition}\newtheorem{lem}[thm]{Lemma}
\theoremstyle{definition}\newtheorem{prop}[thm]{Proposition}
\theoremstyle{definition}\newtheorem{defn}[thm]{Definition}
\theoremstyle{definition}\newtheorem{Rem}[thm]{Remark}
\theoremstyle{remark}
\theoremstyle{definition}\newtheorem{exam}[thm]{Example}
\theoremstyle{definition}

\begin{document}
\title[Homotopic subsets]
{Homotopic subsets of continuous maps and their applications}

\author{Ali Taherifar}
\address{Department of Mathematics, Yasouj University, Yasouj, Iran
\newline \indent}
\email{ataherifar@yu.ac.ir, ataherifar54@gmail.com}

\subjclass[2010]{Primary 14F35; Secondary 18AXX, }


\keywords{Homotopy theory, category theory, group homomorphism, topological group, topological ring, continuous function, Zariski
topology}


\begin{abstract}
In this paper, we introduce the notion of bi-homotopy between subsets of continuous functions. A map $\phi$ from $A$ to $B$ is called an $h$-map if, for each two homotopic maps $f, g\in A$, their image (i.e., $\phi(f), \phi(g)$) are homotopic in $B$. We call an $h$-map $\phi$ from $A$ to $B$ a bi-homotopy if it satisfies two conditions. First, for any $f, g \in A$, $\phi(f)$ is homotopic to $\phi(g)$ in $B$ implies $f$ is homotopic to $g$ in $A$. Next, for each $g \in B$, there exists an $f \in A$ such that $\phi(f)$ is homotopic to $g$ in $B$.
We establish the concept of homotopy equivalence between subsets $A$ and $B$ (denoted as $A \simeq B$) as the existence of two bi-homotopies $\phi$ from $A$ to $B$ and $\psi$ from $B$ to $A$, satisfying $\phi\psi(h)$ is homotopic to $h$ for every $h \in B$, and $\psi\phi(h)$ is homotopic to $h$ for every $h \in A$.
We then apply this definition to characterize homotopic subsets of continuous functions and introduce novel categories of subsets of $C(X, Y)$, notably the category $\mathcal{P}(C(X, Y))$, where $X, Y$ are two topological spaces. In this category, objects represent subsets of $C(X, Y)$, morphisms denote bi-homotopies between any two objects, and a composition law governs the combination of morphisms.
Furthermore, we extend this framework to define homotopic groups (resp., rings) of continuous functions when $Y$ is a topological group (resp., topological ring). Leveraging topological properties of $X$ and $Y$, we investigate the group (resp., ring) properties of $C(X, Y)$. We discuss potential applications and implications of the introduced bi-homotopy concept in the study of continuous functions and their subsets.
\end{abstract}
 \maketitle


\section*{Introduction}\label{intro}
Let $X$ and $Y$ be two topological spaces, and let $C(X, Y)$ denotes the set of all continuous functions from $X$ to $Y$. When we discuss $C(X, Y)$ as a group (resp., ring), we assume that $Y$ is a topological group (resp., topological ring). Homotopy theory is an essential topic in algebraic topology, with numerous applications that can address various difficulties and challenges in topology. It plays a fundamental role in analyzing homotopy groups and their different types.

In this paper, we introduce the concept of bi-homotopy between two subsets of continuous functions. We observe that the concept of bi-homotopy offers new perspectives for understanding the topological and algebraic properties of subsets of continuous functions and their relationships with the underlying spaces. Utilizing this concept, we construct two new categories: the category $\mathcal{P}(C(X, Y))$ and the category $\pi_1[\mathcal{P}(C(X, Y))] = \{\pi_1[A]: A \subseteq C(X, Y)\}$. We prove that the map $\pi_1: \mathcal{P}(C(X, Y)) \longrightarrow \pi_1[\mathcal{P}(C(X, Y))]$ is a functor. Additionally, if $X$ and $Y$ are two homeomorphic spaces, then for any space $Z$, the categories $\mathcal{P}(C(X, Z))$ (resp., $\pi_1[\mathcal{P}(C(X, Y))]$) and $\mathcal{P}(C(Y, Z))$ (resp., $\pi_1[\mathcal{P}(C(Y, Z))]$) are equivalent (Theorem 2.3).

In Section 3 of this paper, we focus on $C(X, Y)$ as a group. First, we define the concept of group-homotopy (resp., $h$-homomorphism) between two groups of continuous functions. We observe that whenever $G$ is a group of continuous functions, $\pi_1[G] = \{[f]: f\in G\}$ is a quotient (factor) group of $G$. The categories $\mathbb{CGC}$ and $\pi_1[\mathbb{CGC}]$ are introduced. We conclude that the category $\pi_1[\mathbb{CGC}]$ is a subcategory of the category of groups, and the map $\pi_1: \mathbb{CGC}\longrightarrow\pi_1[\mathbb{CGC}]$ is a functor (Corollary 3.10).
Next, we prove that if $X$ is a topological space, and $Y$ and $Z$ are two topological groups, then the two groups $\pi_1[C(X, Y \times Z)]$ and $\pi_1[C(X, Z)] \times \pi_1[C(Y, Z)]$ are isomorphic (Theorem 3.11).

In Section 4, we define the concept of ring-homotopy between two rings of continuous functions. We further prove that if two rings of continuous functions, say $A$ and $B$, are homotopic, then two spaces $\Max_H(A)$ and $\Max_H(B)$ are homeomorphic (Theorem 4.4). We observe that when A is a ring of continuous maps, $\pi_1[A]=\{[f]: f\in A\}$ is a factor ring of $A$. Then, ideals (i.e., every ideal, prime ideal and maximal ideal) of $\pi_1[A]$ are characterized (Proposition 4.8).

The reader is referred  to the \cite{H, M} for topological concepts and for algebraic concepts see \cite{AM} and \cite{R_1}.
\section{Preliminaries}
We begin this section with the following definitions.
\begin{defn}
Given two subsets $A$ and $B$ of continuous functions, a map $\phi$ from $A$ to $B$ is called an $h$-map if for any two functions $f, g$ in $A$ that are homotopic ($f\simeq g$), the images $\phi(f)$ and $\phi(g)$ in $B$ are also homotopic. In other words, $\phi$ preserves homotopy between functions in $A$. 
\end{defn}
\begin{defn}
A bi-homotopy $\phi$ is a type of $h$-map that satisfies two additional properties. First, if $\phi(f)$ and $\phi(g)$ are homotopic in $B$, then $f$ and $g$ are homotopic in $A$. Second, for any function $g$ in $B$, there exists a function $f$ in $A$ such that $\phi(f)$ is homotopic to $g$ in $B$.
\end{defn}
If there exist two bi-homotopies $\phi$ from $A$ to $B$ and $\psi$ from $B$ to $A$ such that $\phi(\psi(h))$  is homotopic to $h$ for each $h$ in $B$ and $\psi(\phi(h))$ is homotopic to $h$ for each $h\in A$, then we say that $A$ and $B$ are homotopic (denoted by $A\simeq B$). This means that $A$ and $B$ have the same homotopy type.
\begin{lem}\label{0}
Two subsets $A, B$ of $C(X, Y)$ are homotopic \ifif there exists two $h$-maps $\phi: A\longrightarrow B$ and $\psi: B\longrightarrow A$ such that $\phi\psi(h)\simeq h$ for each $h\in B$ and
 $\psi\phi(h)\simeq h$ for each $h\in A$.
 \end{lem}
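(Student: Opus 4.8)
The plan is to treat the two implications separately, observing that essentially all of the content lies in the backward direction. The forward direction is immediate: if $A \simeq B$, then by the definition preceding the lemma there exist \emph{bi-homotopies} $\phi : A \longrightarrow B$ and $\psi : B \longrightarrow A$ with $\phi\psi(h) \simeq h$ for each $h \in B$ and $\psi\phi(h) \simeq h$ for each $h \in A$. Since every bi-homotopy is in particular an $h$-map, these very maps witness the right-hand side, so nothing further is needed.

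For the backward direction I would start from two $h$-maps $\phi : A \longrightarrow B$ and $\psi : B \longrightarrow A$ satisfying $\phi\psi(h) \simeq h$ for all $h \in B$ and $\psi\phi(h) \simeq h$ for all $h \in A$, and \emph{upgrade} each of them to a bi-homotopy; once this is done the composition hypotheses are already exactly those required by the definition of $A \simeq B$, so the conclusion follows. The only tools the argument uses are that an $h$-map preserves homotopy and that $\simeq$ is an equivalence relation on continuous maps (in particular symmetric and transitive), and I would record the latter explicitly at the outset since the whole proof rests on it.

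To see that $\phi$ is a bi-homotopy I verify its two defining properties. For the first (injectivity-type) condition, suppose $\phi(f) \simeq \phi(g)$ in $B$ for some $f, g \in A$; applying the $h$-map $\psi$ gives $\psi\phi(f) \simeq \psi\phi(g)$, and combining this with $\psi\phi(f) \simeq f$, $\psi\phi(g) \simeq g$ and transitivity yields $f \simeq g$. For the second (surjectivity-type) condition, given $g \in B$ I set $f := \psi(g) \in A$, whence $\phi(f) = \phi\psi(g) \simeq g$. The proof that $\psi$ is a bi-homotopy is entirely symmetric, interchanging the roles of $\phi$ and $\psi$ and using that $\phi$ is an $h$-map together with $\phi\psi \simeq \mathrm{id}$.

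The main obstacle here is not conceptual but bookkeeping: there is no genuine difficulty, yet one must be careful to invoke symmetry and transitivity of $\simeq$ at the correct moments, and in particular to notice that it is precisely the $h$-map property of the \emph{opposite} map that allows a homotopy to be pulled back through a composition $\psi\phi$ or $\phi\psi$. Keeping straight which of $\phi$ and $\psi$ is being applied in each step is the only place an error could creep in.
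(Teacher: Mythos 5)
Your argument is correct and coincides with the paper's own proof: the forward direction is noted as immediate, and the backward direction upgrades $\phi$ (and symmetrically $\psi$) to a bi-homotopy via the chain $f\simeq\psi\phi(f)\simeq\psi\phi(g)\simeq g$ and the choice $f=\psi(g)$ for surjectivity. Your explicit flagging of where the $h$-map property of the opposite map and transitivity of $\simeq$ are used is a slightly more careful write-up of exactly the same steps.
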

 \begin{proof}
 The necessity of the lemma is trivial. For sufficiency, suppose that there exist $h$-maps $\phi: A\longrightarrow B$ and $\psi: B\longrightarrow A$ such that $\phi\psi(h)\simeq h$ for each $h\in B$ and $\psi\phi(h)\simeq h$ for each $h\in A$. We claim that $\phi$ is a bi-homotopy, and the proof for $\psi$ is similar. Let $f, g\in A$ and $\phi(f)\simeq \phi(g)$. Then by hypothesis, we have $f\simeq\psi(\phi(f))\simeq\psi(\phi(g))\simeq g$. Consider $g\in B$. Then $\psi(g)\in B$. Put $f=\psi(g)$. Then we have $\phi(f)=\phi\psi(g))\simeq g$, by hypothesis. So we are done.
 \end{proof}
 \begin{exam}
 (1) For each two continuous functions $f, g$, if we put $A=\{f\}$ and $B=\{g\}$, then trivially $A$ and $B$ are homotopic.

 (2) Let $A=\{f\}$ and $B=\{g_{1}, g_{2}\}$, where $f, g_1, g_2\in C(X, Y)$. Then  $A\simeq B$ \ifif $g_1$ and $g_2$ are homotopic. To see it, first assume $\phi$ is a bi-homotopy from $A$ to $B$. Then by Condition 2 in the definition, $g_1\simeq\phi(f)\simeq g_2$. Next, assume $g_1$ and $g_2$ are homotopic. Then $\phi=\{(f, g_1)\}$ and $\psi=\{(g_1, f), (g_2, f)\}$ are two bi-homotopies from $A$ to $B$ and $B$ to $A$, respectively, and we have $\phi\psi(h)\simeq h$ for each $h\in B$ and $\psi\phi(h)\simeq h$ for each $h\in A$, respectively. Thus $A$ and $B$ are homotopic.

 (3) Consider two functions $f, g: S^1\longrightarrow S^1$ by $f(x)=x$ and $g(x)=-x$. Then trivially $f\simeq g$. Now, by Part (2),  $A=\{f\}$ and $B=\{f, g\}$  are homotopic subsets of $C(S^1, S^1)$.

 (4) Let $A=\{f_1, g_1\}$ and $B=\{f_2, g_2\}$ as two subsets of $C(X, Y)$. Then, we have $A\simeq B$ if and only if one of the following conditions holds.
 \begin{enumerate}
 \item $f_1\simeq g_1$ and $f_2\simeq g_2$;

\item $f_1$ is not homotopic to $g_1$ and $f_2$ is not homotopic to $g_2$.
 \end{enumerate}

 \end{exam}
  It's worth noting that we may have a bi-homotopy $\phi$ from $A$ to $B$ but $\phi^{-1}$ is not a bi-homotopy from $B$ to $A$. To see example, let $A=\{f_1, f_2\}$, where $f_1$ and $f_2$ are two non-homotopic functions, and $B=\{g_1, g_2\}$, where $g_{1}\simeq g_2$ and $A, B\subseteq C(X, Y)$ for two spaces $X, Y$. Then $\phi=\{(f_1, g_1), (f_2, g_2)\}$ is a bi-homotopy from $A$ to $B$. But $\phi^{-1}=\{(g_1, f_1), (g_2, f_2)\}$ is not a bi-homotopy from $B$ to $A$. This example also shows us there exists a bi-homotopy from $A$ to $B$ but $A$ and $B$ are not homotopic. 

 Similarly, a bijection between $A$ and $B$ need not be a bi-homotopy. For instance, let $A=\{f_1, f_2\}$ and $B=\{g_1, g_2\}$ be two subsets of $C(X, Y)$, where $f_1\simeq g_1$ but $f_2$ and $g_2$ are not homotopic. Then $\phi=\{(f_1, g_1), (f_2, g_2)\}$ is a bijection from $A$ to $B$ which is not a bi-homotopy.
 \begin{prop}
 If $\phi: A\longrightarrow B$ is a bi-homotopy which is a bijection too, then $A$ and $B$ are bi-homotopic.
 \end{prop}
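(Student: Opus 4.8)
The plan is to exhibit, for the bijective bi-homotopy $\phi$, a companion map $\psi\colon B\longrightarrow A$ and then to invoke Lemma~\ref{0}, which reduces the task of proving $A\simeq B$ to checking only that $\phi$ and $\psi$ are $h$-maps satisfying the two composition conditions $\phi\psi(h)\simeq h$ and $\psi\phi(h)\simeq h$. Since $\phi$ is a bijection, the natural candidate is $\psi=\phi^{-1}$, and with this choice the two composition conditions hold in the strongest possible sense: for each $h\in B$ we have $\phi\psi(h)=\phi(\phi^{-1}(h))=h\simeq h$, and for each $h\in A$ we have $\psi\phi(h)=\phi^{-1}(\phi(h))=h\simeq h$. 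Thus the only real content is to verify that $\psi=\phi^{-1}$ is an $h$-map.

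The key step is therefore to show that $\phi^{-1}$ preserves homotopy. I would take $g_1,g_2\in B$ with $g_1\simeq g_2$ and set $f_1=\phi^{-1}(g_1)$, $f_2=\phi^{-1}(g_2)$, so that $\phi(f_1)=g_1$ and $\phi(f_2)=g_2$. Then $\phi(f_1)\simeq\phi(f_2)$, and because $\phi$ is a bi-homotopy, its first defining clause (that $\phi(f)\simeq\phi(g)$ forces $f\simeq g$) yields $f_1\simeq f_2$, i.e. $\phi^{-1}(g_1)\simeq\phi^{-1}(g_2)$. This is exactly the assertion that $\psi=\phi^{-1}$ is an $h$-map. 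It is worth noting that this argument uses only the homotopy-reflecting clause of the bi-homotopy definition, not the surjectivity-up-to-homotopy clause.

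Having established that $\phi$ is an $h$-map (being a bi-homotopy) and that $\psi=\phi^{-1}$ is an $h$-map satisfying $\phi\psi(h)\simeq h$ for all $h\in B$ and $\psi\phi(h)\simeq h$ for all $h\in A$, I would conclude by applying Lemma~\ref{0} directly to obtain $A\simeq B$.

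I expect no genuine obstacle here: the bijectivity of $\phi$ makes the composition conditions hold as literal equalities, and the homotopy-reflecting property of a bi-homotopy is precisely what is needed to transport homotopies backward through $\phi^{-1}$. The only subtlety worth flagging is conceptual rather than technical—one should resist the temptation to prove directly that $\phi^{-1}$ is itself a full bi-homotopy (which would require re-deriving its reflecting and covering clauses), since Lemma~\ref{0} was designed exactly so that the weaker $h$-map condition suffices.
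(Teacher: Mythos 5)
Your proposal is correct and follows essentially the same route as the paper: both take $\psi=\phi^{-1}$, note the composition conditions hold as literal equalities, reduce to showing $\phi^{-1}$ is an $h$-map via Lemma~\ref{0}, and deduce that from the homotopy-reflecting clause of the bi-homotopy $\phi$. Your write-up is, if anything, slightly more explicit about which clause of the definition is used.
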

 \begin{proof}
 By Lemma \ref{0}, it suffices to show that $\phi^{-1}: B\longrightarrow A$ is an $h$-map. To do so, let $g_1, g_2\in B$ with $g_1\simeq g_2$. Since $\phi$ is a bi-homotopy, we have $\phi(\phi^{-1}(g_1))\simeq\phi(\phi^{-1}(g_2))$. Therefore, $\phi^{-1}(g_1)\simeq\phi^{-1}(g_2)$, which shows that $\phi^{-1}$ preserves homotopy classes.
 \end{proof}
 \begin{prop}\label{ali}
 Let $X, Y$, and $Z$ be three topological spaces.
 \begin{enumerate}
 \item If $X$ and $Y$ are
 two equivalent spaces, then $C(X, Z)\simeq C(Y, Z)$.

 \item If $X$ and $Y$ are
 two equivalent spaces, then $C(Z, X)\simeq C(Z, Y)$.
 \end{enumerate}
\end{prop}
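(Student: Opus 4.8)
The plan is to read ``equivalent spaces'' as \emph{homotopy equivalent} and to realize the required bi-homotopies by pre- and post-composition with the maps witnessing that equivalence. Since $X$ and $Y$ are homotopy equivalent, I fix continuous maps $\alpha\colon X\to Y$ and $\beta\colon Y\to X$ with $\beta\alpha\simeq\mathrm{id}_X$ and $\alpha\beta\simeq\mathrm{id}_Y$. The central simplification is that, by Lemma \ref{0}, it is enough to exhibit $h$-maps in both directions whose two composites are homotopic to the identity maps; I never have to verify the injectivity- and surjectivity-type clauses of the bi-homotopy definition directly.

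For part (1), I would set $\phi\colon C(X,Z)\longrightarrow C(Y,Z)$, $\phi(g)=g\circ\beta$, and $\psi\colon C(Y,Z)\longrightarrow C(X,Z)$, $\psi(f)=f\circ\alpha$. The first step is to check these are $h$-maps: if $g_1\simeq g_2$ in $C(X,Z)$ via a homotopy $H\colon X\times I\to Z$, then $H\circ(\beta\times\mathrm{id}_I)$ is a homotopy $g_1\beta\simeq g_2\beta$, so precomposition with $\beta$ preserves homotopy, and likewise precomposition with $\alpha$ handles $\psi$. The second step computes $\psi\phi(g)=g\circ(\beta\alpha)$ and $\phi\psi(f)=f\circ(\alpha\beta)$; postcomposing the homotopy $\beta\alpha\simeq\mathrm{id}_X$ with $g$ gives $g\circ(\beta\alpha)\simeq g$ for every $g\in C(X,Z)$, and postcomposing $\alpha\beta\simeq\mathrm{id}_Y$ with $f$ gives $f\circ(\alpha\beta)\simeq f$ for every $f\in C(Y,Z)$. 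Lemma \ref{0} then yields $C(X,Z)\simeq C(Y,Z)$.

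For part (2) the construction is dual, using post-composition: set $\phi\colon C(Z,X)\longrightarrow C(Z,Y)$, $\phi(g)=\alpha\circ g$, and $\psi\colon C(Z,Y)\longrightarrow C(Z,X)$, $\psi(f)=\beta\circ f$. Post-composition preserves homotopy (compose a homotopy $g_1\simeq g_2$ on the left with $\alpha$), so both maps are $h$-maps. Moreover $\psi\phi(g)=(\beta\alpha)\circ g\simeq g$ and $\phi\psi(f)=(\alpha\beta)\circ f\simeq f$, where now the homotopy $\beta\alpha\simeq\mathrm{id}_X$ is precomposed with $g\times\mathrm{id}_I$ to produce the homotopy $(\beta\alpha)\circ g\simeq g$, and symmetrically for $f$. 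Lemma \ref{0} again finishes the argument.

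The main obstacle is not a hard computation but a matter of bookkeeping. The entire proof rests on the single elementary fact that composition of continuous maps respects homotopy, so that a homotopy on one factor propagates to the composite; the only care required is to orient $\alpha$ and $\beta$ correctly so that each composite lands in the intended function space. The genuinely useful observation is that invoking Lemma \ref{0} lets me avoid proving, for instance, that every $f\in C(Y,Z)$ is homotopic to some $g\circ\beta$ with $g\in C(X,Z)$; that surjectivity-up-to-homotopy is already encoded in the identity $\phi\psi(f)\simeq f$, so no separate verification of the two bi-homotopy conditions is needed.
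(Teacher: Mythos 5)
Your proposal is correct and follows essentially the same route as the paper: both define the maps by precomposition (resp.\ postcomposition for part (2)) with the homotopy equivalences $\alpha$ and $\beta$, verify they are $h$-maps, and invoke Lemma \ref{0} to avoid checking the two bi-homotopy clauses directly. Your write-up is in fact slightly more careful, since you exhibit the explicit homotopies (e.g.\ $H\circ(\beta\times\mathrm{id}_I)$) and spell out part (2), which the paper only declares ``similar.''
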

 \begin{proof}
 (1) Since $X$ and $Y$ are equivalent spaces, there exist continuous maps $f:X\longrightarrow Y$ and $g:Y\longrightarrow X$ such that $fg\simeq I_Y$ and $gf\simeq I_X$. Define $\phi:C(X,Z)\longrightarrow C(Y,Z)$ by $\phi(h)=hg$ for $h\in C(X,Z)$, and define $\psi:C(Y,Z)\longrightarrow C(X,Z)$ by $\psi(h)=hf$ for $h\in C(Y,Z)$.
To show that $\phi$ and $\psi$ are $h$-maps, let $h_1\simeq h_2$ in $C(X,Z)$. Then $h_1f\simeq h_2f$ since $f$ is continuous, which implies that $\phi(h_1)\simeq\phi(h_2)$. Similarly, we can show that $\psi$ is an $h$-map.
Now, for each $h\in C(X,Z)$, we have $\psi\phi(h)=\psi(hg)=(hg)f=h(gf)\simeq hI_X\simeq h$, and similarly, for each $h\in C(Y,Z)$, we have $\phi\psi(h)\simeq h$. Hence, by Lemma \ref{0}, $\phi$ and $\psi$ are bi-homotopy equivalences, and we have $C(X,Z)\simeq C(Y,Z)$.

(2) The proof is similar to the proof of Part (1).
 \end{proof}
 It follows directly from Proposition \ref{ali} by setting $Z=\mathbb{R}$ for Part (1) and $Z=X$ and $Z=Y$ for Parts (2) and (3), respectively.
  \begin{cor}
 The following statements hold.
 \begin{enumerate}
 \item If $X$ and $Y$ are two equivalent spaces, then $C(X)\simeq C(Y)$.

 \item If $X$ and $Y$ are two equivalent spaces, then $C(X, X)\simeq C(X, Y)$.

 \item If $X$ and $Y$ are two equivalent spaces, then  $C(Y, X)\simeq C(Y, Y)$.
 \end{enumerate}
 \end{cor}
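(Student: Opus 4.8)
The plan is to obtain all three parts as direct specializations of Proposition~\ref{ali}, since the hypothesis in each case is exactly that $X$ and $Y$ are equivalent spaces. The only preliminary observation I need is the standard notational convention that $C(X)$ abbreviates $C(X,\mathbb{R})$, so that Part~(1) is genuinely a statement about $C(X,Z)$-type objects with $Z=\mathbb{R}$.

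For Part~(1), I would invoke Proposition~\ref{ali}(1), which asserts $C(X,Z)\simeq C(Y,Z)$ whenever $X$ and $Y$ are equivalent. Taking $Z=\mathbb{R}$ gives $C(X,\mathbb{R})\simeq C(Y,\mathbb{R})$, that is, $C(X)\simeq C(Y)$, establishing the first claim. No further construction is required, since the explicit bi-homotopies $\phi(h)=hg$ and $\psi(h)=hf$ built in the proof of Proposition~\ref{ali}(1) already work for the target $Z=\mathbb{R}$.

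For Parts~(2) and (3), I would instead appeal to Proposition~\ref{ali}(2), which gives $C(Z,X)\simeq C(Z,Y)$ for every space $Z$ when $X$ and $Y$ are equivalent. Setting $Z=X$ produces $C(X,X)\simeq C(X,Y)$, which is Part~(2); setting $Z=Y$ produces $C(Y,X)\simeq C(Y,Y)$, which is Part~(3). In both cases the equivalence of $X$ and $Y$ is precisely the hypothesis of Proposition~\ref{ali}(2), so the substitutions are legitimate.

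I do not expect any genuine obstacle here: all of the real content—the construction of the bi-homotopies $\phi$ and $\psi$ via precomposition (for Part~(1)) or postcomposition/domain substitution (for the analogue used in Parts~(2)--(3)) and the verification via Lemma~\ref{0} that $\psi\phi(h)\simeq h$ and $\phi\psi(h)\simeq h$—has already been carried out in Proposition~\ref{ali}. The corollary is therefore purely a matter of selecting the correct value of $Z$ in each item, and the proof can be stated in a single line for each part.
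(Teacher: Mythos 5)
Your proposal is correct and coincides with the paper's own justification: the corollary is stated there as following directly from Proposition~\ref{ali} by taking $Z=\mathbb{R}$ in Part~(1) of that proposition for the first claim, and $Z=X$, $Z=Y$ in Part~(2) for the second and third claims. Nothing further is needed.
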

  
 It is well-known that, if $X$ is a non-empty contractible space, then there is an $x_0\in X$ such that $X$ and $\{x_0\}$ are of the same homotopy. On the other hand, for any topological space $Y$,  $C(\{x_0\}, Y)=\{\{(x_0, y)\}: y\in Y\}$. We can therefore deduce from Part 1 of Proposition \ref{ali} the following result.
 \begin{cor}
 If $X$ is a contractible space, then $C(X, Y)\simeq \{\{(x_0, y)\}: y\in Y\}$, for some $x_0\in X$ .
 \end{cor}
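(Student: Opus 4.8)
The plan is to reduce the claim to a single application of Part (1) of Proposition \ref{ali}, using the homotopy-theoretic characterization of contractibility. First I would recall that a non-empty contractible space $X$ is, by definition, homotopy equivalent to a point: concretely, there is some $x_0 \in X$ together with the constant map $c \colon X \to \{x_0\}$ and the inclusion $\iota \colon \{x_0\} \to X$ satisfying $\iota c \simeq I_X$ and $c\iota \simeq I_{\{x_0\}}$. (Here $c\iota$ is literally the identity on the singleton, and $\iota c$ is the constant map at $x_0$, which is homotopic to $I_X$ precisely because $X$ contracts to $x_0$.) This is exactly the data witnessing that $X$ and the singleton $\{x_0\}$ are \emph{equivalent spaces} in the sense used in Proposition \ref{ali}.

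Next I would invoke Part (1) of Proposition \ref{ali} with the two equivalent spaces taken to be $X$ and $\{x_0\}$, and with the fixed target taken to be $Y$. This yields immediately
\[
C(X, Y) \simeq C(\{x_0\}, Y).
\]

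It then remains only to identify the right-hand side explicitly. A continuous map from the one-point space $\{x_0\}$ to $Y$ is nothing more than a choice of image point $y \in Y$, and every such assignment is automatically continuous; hence $C(\{x_0\}, Y) = \{\{(x_0, y)\} : y \in Y\}$. Substituting this description gives $C(X, Y) \simeq \{\{(x_0, y)\} : y \in Y\}$, as desired.

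I expect no genuine obstacle here, since the substance is entirely carried by Proposition \ref{ali} and the remaining work is bookkeeping. The only two points deserving a line of justification are that contractibility supplies precisely the maps required by the definition of equivalent spaces, so that the hypothesis of Proposition \ref{ali} is actually met, and the elementary observation that every set-map out of a singleton is continuous, which makes the identification of $C(\{x_0\}, Y)$ an exact equality rather than merely a bijection.
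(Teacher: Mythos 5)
Your proposal is correct and follows exactly the same route as the paper: the paper likewise observes that a contractible $X$ is homotopy equivalent to some singleton $\{x_0\}$, identifies $C(\{x_0\}, Y)$ with $\{\{(x_0, y)\}: y\in Y\}$, and concludes by Part (1) of Proposition \ref{ali}. The only difference is that you spell out the witnessing maps $c$ and $\iota$ and the continuity of maps out of a point, which the paper leaves implicit.
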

 For a subset $A$ of $C(X, Y)$, we denote by $\pi_1[A]$ the set of all $[f]=\{g\in C(X, Y): g\simeq f\}$, where $f\in A$.
 \begin{prop}
 Let $A, B\subseteq C(X, Y)$.
 If there exists a bi-homotopy from $A$ to $B$, then $\pi_1[A]$ and $\pi_1[B]$ are two equivalent sets.
 \end{prop}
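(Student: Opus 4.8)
The plan is to build an explicit bijection between the two sets of homotopy classes directly out of the given bi-homotopy $\phi$. Recall that $\pi_1[A]$ and $\pi_1[B]$ are the sets of homotopy classes $[f]$ of maps drawn from $A$ and $B$ respectively, so to say they are \emph{equivalent sets} it suffices to produce a bijection between them. The obvious candidate is the induced map $\overline{\phi}\colon \pi_1[A] \longrightarrow \pi_1[B]$ defined on classes by $\overline{\phi}([f]) = [\phi(f)]$ for $f \in A$.

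First I would verify that $\overline{\phi}$ is well defined. If $f, f' \in A$ represent the same class, that is $f \simeq f'$, then since $\phi$ is an $h$-map we get $\phi(f) \simeq \phi(f')$, hence $[\phi(f)] = [\phi(f')]$; thus the value of $\overline{\phi}$ does not depend on the chosen representative. This step is the one place where the $h$-map hypothesis is genuinely used, and although it is short it is the conceptual heart of the argument.

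Next, injectivity of $\overline{\phi}$ follows from the first defining property of a bi-homotopy: if $\overline{\phi}([f]) = \overline{\phi}([f'])$, then $\phi(f) \simeq \phi(f')$ in $B$, and by that property $f \simeq f'$ in $A$, so $[f] = [f']$. For surjectivity I would invoke the second defining property: given any class $[g] \in \pi_1[B]$ with $g \in B$, there is some $f \in A$ with $\phi(f) \simeq g$, whence $\overline{\phi}([f]) = [\phi(f)] = [g]$. Combining these three facts shows $\overline{\phi}$ is a bijection, so $\pi_1[A]$ and $\pi_1[B]$ are equivalent sets.

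I do not anticipate a serious obstacle here: unlike the homotopy equivalence $A \simeq B$, which (as the counterexamples preceding this statement illustrate) genuinely requires a bi-homotopy in both directions, a mere bijection of homotopy classes needs only the one-sided data packaged in a single bi-homotopy. The only point demanding care is to keep the distinction between an equality of classes and a homotopy of representatives straight, so that each of the three bi-homotopy conditions is applied to the correct implication.
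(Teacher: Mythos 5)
Your proposal is correct and follows essentially the same route as the paper: both define the induced map $[f]\mapsto[\phi(f)]$ on homotopy classes and verify injectivity and surjectivity from the two bi-homotopy conditions. The only difference is that you explicitly check well-definedness via the $h$-map property, a step the paper's proof of this proposition leaves implicit (it appears later in the construction of $\pi_1(\phi)$ in Section 2), so your version is if anything slightly more complete.
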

 \begin{proof}
Let $\phi$ be a bi-homotopy from $A$ to $B$. Define a map $\psi$ from $\pi_1[A]$ to $\pi_1[B]$ by sending the homotopy class  $[f]$ in $A$ to the homotopy class $[\phi(f)]$ in $B$.
To show that $\psi$ is one-to-one, suppose $[f_1]$ and $[f_2]$ are homotopy classes  in $A$, and that $\psi([f_1])=\psi([f_2])$. Then $[\phi(f_1)]=[\phi(f_2)]$, which means that $\phi(f_1)$ and $\phi(f_2)$ are homotopic in $B$. Since $\phi$ is a bi-homotopy, this implies that $f_1$ and $f_2$ are homotopic in $A$, and thus $[f_1]=[f_2]$. This shows that $\psi$ is one-to-one.
To show that $\psi$ is onto, let $[g]$ be a homotopy class  in $\pi_1[B]$. Then there exists an $f\in A$ such that $\phi(f)$ is homotopic to $g$ in $B$. Hence, $\psi([f])=[\phi(f)]=[g]$, which shows that $\psi$ is onto.
Therefore, $\psi$ is a one-to-one and onto map between $\pi_1[A]$ and $\pi_1[B]$, which implies that $\pi_1[A]$ and $\pi_1[B]$ are equivalent.
 \end{proof}
\section{Bi-homotopy categories}
We begin with the following lemma.

\begin{lem}\label{rang}
The composition of two bi-homotopies is a bi-homotopy.
\end{lem}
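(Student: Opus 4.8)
The plan is to take two bi-homotopies $\phi : A \longrightarrow B$ and $\psi : B \longrightarrow C$ and verify directly that the composite $\psi\phi : A \longrightarrow C$ (where $\psi\phi(f) = \psi(\phi(f))$) satisfies all three defining requirements of a bi-homotopy: that it is an $h$-map, that $\psi\phi(f)\simeq\psi\phi(g)$ forces $f\simeq g$, and that every $h\in C$ has a preimage up to homotopy. Each requirement should follow by chaining the corresponding property of $\phi$ with that of $\psi$.

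First I would check that $\psi\phi$ is an $h$-map. Given $f\simeq g$ in $A$, the $h$-map property of $\phi$ yields $\phi(f)\simeq\phi(g)$ in $B$, and then the $h$-map property of $\psi$ yields $\psi(\phi(f))\simeq\psi(\phi(g))$ in $C$, which is exactly $\psi\phi(f)\simeq\psi\phi(g)$. For the first bi-homotopy condition I would run the same chain in reverse: if $\psi\phi(f)\simeq\psi\phi(g)$, then since $\psi$ is a bi-homotopy it reflects homotopy, so $\phi(f)\simeq\phi(g)$, and since $\phi$ is a bi-homotopy it too reflects homotopy, so $f\simeq g$.

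Finally, for the second condition, given $h\in C$ I would first use that $\psi$ is a bi-homotopy to produce $g\in B$ with $\psi(g)\simeq h$, and then use that $\phi$ is a bi-homotopy to produce $f\in A$ with $\phi(f)\simeq g$. The step I expect to need slightly more care is concluding $\psi\phi(f)\simeq h$: to pass from $\phi(f)\simeq g$ to $\psi(\phi(f))\simeq\psi(g)$ I must push the homotopy through $\psi$, so here I invoke that $\psi$ is an $h$-map, obtaining $\psi(\phi(f))\simeq\psi(g)\simeq h$. Thus the only genuine subtlety is recognizing that the surjectivity step relies on the homotopy-preserving property of $\psi$ rather than on its surjectivity alone; everything else is a routine composition of the three defining properties.
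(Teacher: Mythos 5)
Your proof is correct and follows essentially the same route as the paper's: verify the $h$-map property, the reflection of homotopy, and the surjectivity-up-to-homotopy condition by chaining the corresponding properties of $\phi$ and $\psi$. Your explicit remark that the final step $\psi(\phi(f))\simeq\psi(g)$ requires the $h$-map property of $\psi$ is a detail the paper uses silently, so if anything your write-up is slightly more careful.
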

\begin{proof}
Let $\phi: A\longrightarrow
B$ and $\psi: B\longrightarrow C$ be two bi-homotopies.  We want to show that $\psi\circ\phi:A\longrightarrow C$ is also a bi-homotopy. To see this, let $f,g\in A$ be two homotopic maps, i.e., $f\simeq g$.  Since $\phi$ is a bi-homotopy, $\phi(f)\simeq\phi(g)$ in $B$. Since $\psi$ is also a bi-homotopy, it follows that $\psi(\phi(f))\simeq\psi(\phi(g))$ in $C$.  Now, assume $\psi(\phi(f))=(\psi\circ\phi)(f)\simeq(\psi\circ\phi)(g)=\psi(\phi(g))$ for some $f, g\in A$. Then $\phi(f)\simeq\phi(g)$, since $\psi$ is a bi-homotopy. Again, since $\phi$ is a bi-homotopy, $f\simeq g$. Finally, consider $g\in C$. Then, there is an $h\in B$ such that $\psi(h)\simeq g$. Hence, there is an $f\in A$ such that $\phi(f)\simeq h$. Thus, $(\psi\circ\phi)(f)\simeq\psi(h)\simeq g$.
\end{proof}
\begin{prop}
The relation $\simeq$ on $\mathcal{P}(C(X, Y))$ is an equivalence
relation.
\end{prop}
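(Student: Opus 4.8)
The plan is to verify the three defining properties of an equivalence relation---reflexivity, symmetry, and transitivity---for $\simeq$ on $\mathcal{P}(C(X,Y))$, drawing on Lemma \ref{0} and Lemma \ref{rang}. For reflexivity, I would take $\phi = \psi = \mathrm{id}_A$ for any subset $A$. The identity is trivially a bi-homotopy: it preserves homotopy since $\mathrm{id}(f) = f$, it reflects homotopy for the same reason, and for each $g \in A$ the choice $f = g$ gives $\mathrm{id}(f) \simeq g$. Because $\mathrm{id}\circ\mathrm{id}(h) = h \simeq h$, the pair $(\mathrm{id}_A, \mathrm{id}_A)$ witnesses $A \simeq A$.

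Symmetry is immediate from the symmetric form of the definition: if $A \simeq B$ is witnessed by bi-homotopies $\phi: A \to B$ and $\psi: B \to A$ with $\phi\psi(h) \simeq h$ for $h\in B$ and $\psi\phi(h) \simeq h$ for $h\in A$, then interchanging the roles of $\phi$ and $\psi$ shows that $(\psi, \phi)$ witnesses $B \simeq A$; no new construction is needed.

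The substantive step is transitivity. Suppose $A \simeq B$ via $(\phi_1, \psi_1)$ and $B \simeq C$ via $(\phi_2, \psi_2)$. I would propose the composites $\phi_2\phi_1 : A \to C$ and $\psi_1\psi_2 : C \to A$ as the candidate witnesses for $A \simeq C$; by Lemma \ref{rang} each composite is again a bi-homotopy, so by Lemma \ref{0} it remains only to check the two round-trip conditions. For $h \in C$ I compute $(\phi_2\phi_1)(\psi_1\psi_2)(h) = \phi_2\bigl(\phi_1\psi_1(\psi_2(h))\bigr)$; applying the condition $\phi_1\psi_1(k) \simeq k$ with $k = \psi_2(h) \in B$ gives $\phi_1\psi_1(\psi_2(h)) \simeq \psi_2(h)$, and since $\phi_2$ is an $h$-map this homotopy is preserved, yielding $\phi_2\bigl(\phi_1\psi_1(\psi_2(h))\bigr) \simeq \phi_2(\psi_2(h)) \simeq h$. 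The symmetric computation for $h \in A$, using $\psi_2\phi_2(k) \simeq k$ with $k = \phi_1(h)$ together with the fact that $\psi_1$ is an $h$-map, gives $(\psi_1\psi_2)(\phi_2\phi_1)(h) \simeq h$. Thus $A \simeq C$.

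The only point demanding care---the hard part, though it is not genuinely difficult---is this transitivity argument, where I must exploit that bi-homotopies are $h$-maps in order to push the inner homotopy $\phi_1\psi_1(\psi_2(h)) \simeq \psi_2(h)$ through the outer map $\phi_2$, and must implicitly use that homotopy of functions is itself transitive when chaining the $\simeq$ relations. Lemma \ref{rang} does the heavy lifting of establishing that the composites are bi-homotopies, so once it is invoked the remaining verification is routine bookkeeping of the composition identities.
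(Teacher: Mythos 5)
Your proposal is correct and follows essentially the same route as the paper: reflexivity via identities, symmetry by swapping the witnesses, and transitivity by composing the forward and backward bi-homotopies (which are bi-homotopies by Lemma \ref{rang}) and then pushing the inner round-trip homotopy through the outer $h$-map. The paper's proof is the same computation, just with the middle-space round-trip condition applied in the identical way, so there is nothing substantive to distinguish the two.
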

\begin{proof}Trivially, each $A\in \mathcal{P}(C(X, Y))$ is homotopic with itself
and $A\simeq B$ implies $B\simeq A$. Now assume $A\simeq B$ and
$B\simeq C$. Then there are bi-homotopies  $\phi_{1}: A\longrightarrow
B$, $\phi_{2}:B\longrightarrow A$ and $\psi_{1}: B\longrightarrow C$ and
$\psi_{2}:C\longrightarrow B$ such that $\phi_{2}\phi_{1}(h)\simeq h$
and $\psi_{2}\psi_{1}(h)\simeq h$ for $h\in A$ and $h\in B$,
respectively. Thus  $\psi_{1}\phi_{1}:A\longrightarrow C$ and
$\phi_{2}\psi_{2}:C\longrightarrow A$ are bi-homotopies, by Lemma \ref{rang}. Moreover, for each $h\in A$,
\[(\phi_{2}\psi_{2}(\psi_{1}\phi_{1})(h))=\phi_{2}(\psi_{2}(\psi_{1}\phi_{1}(h)))\simeq\phi_{2}(\phi_{1}(h))\simeq h.\]
Similarly, we can prove that
$(\psi_{1}\phi_{1})(\phi_{2}\psi_{2})(h)\simeq h$ for each $h\in C$.
Thus $A\simeq C$. This shows that $\simeq$ is an equivalence
relation on $\mathcal{P}(C(X, Y))$.
\end{proof}
The Lemma \ref{rang} helps us to construct a category.
Let $X, Y$ be two topological spaces. Then $\mathcal{P}(C(X, Y))$ is a category. In this category objects are subsets of $C(X, Y)$ and morphisms are bi-homotopies between any two objects, say, for two subsets $A, B$ of $C(X, Y)$ we denote it by $\mathcal{S}(A, B)$. An identity morphism $id_{A}\in\mathcal{S}(A, A)$, for each object $A\in\mathcal{P}(C(X, Y))$,  and a composition law for each triple of objects $A, B, C$, as follows;
\[\circ:\mathcal{S}(A, B)\times\mathcal{S}(B, C)\longrightarrow\mathcal{S}(A, C)\quad\text{by}\quad \circ(\psi, \phi)=\phi\circ\psi,\]\[\text{where}\quad \psi\in \mathcal{S}(A, B)\quad\text{and}\quad\phi\in\mathcal{S}(B, C).\] Evidently, composition is associate. For $\phi\in\mathcal{S}(A, B)$, $id_{B}\circ\phi=\phi$ and  $\phi\circ id_A=\phi$.

We will use the category $\mathcal{P}(C(X, Y))$ to produce another category that will be used later. Consider $\pi_1[\mathcal{P}(C(X, Y))]=\{\pi_1[A]: A\in\mathcal{P}(C(X, Y))\}$. Let $\phi: A\longrightarrow B\in Mor(\mathcal{P}(C(X, Y))$. Define $\pi_1(\phi): \pi_1[A]\longrightarrow\pi_1[B]$ by $\pi_1(\phi)([f])=[\phi(f)]$. First, we claim that $\pi_1(\phi)$ is a map for each morphism $\phi\in Mor(\mathcal{P}(C(X, Y))$. Let $[f_1]=[f_2]\in \pi_1[A]$. Then $f_1\simeq f_2$. Thus $\phi(f_1)\simeq\phi(f_2)$, since  $\phi$ is an $h$-map. This shows that $\pi_1(\phi)([f_1])=[\phi(f_1)]=[\phi(f_2)]=\pi_1(\phi)([f_2])$. Next, assume $\pi_1(\phi_1): \pi_1[A]\longrightarrow\pi_1[B]$ and $\pi_1(\psi): \pi_1[B]\longrightarrow\pi_1[C]$  are two maps. Then we claim that $\pi_1(\psi)\pi_1(\phi)=\pi_1(\psi\phi)$. This shows that the set $\{\pi_1(\phi): \phi \in Mor(\mathcal{P}(C(X, Y))\}$ is closed under composition. Proof of the claim: Let $[f]\in\pi_1[A]$. Then we have, \[\pi_1(\psi)\pi_1(\phi)([f])=\pi_1(\psi)(\pi_1(\phi)([f]))=\pi_1(\psi)([\phi(f)])=[\psi(\phi(f))]=\pi_1(\psi\phi)([f]).\] On the other hand,  for each three morphisms $\phi: A\longrightarrow B$, $\psi: B\longrightarrow C$ and $\eta: C\longrightarrow D$, we have $\eta( \psi\phi)=(\eta\psi)\phi$. Hence $\pi_1(\eta)(\pi_1(\psi)\pi_1(\phi))=\pi_1(\eta)(\pi_1(\psi\phi))=\pi_1(\eta(\psi\phi))=\pi_1((\eta\psi)\phi)=(\pi_1(\eta)\pi_1(\psi))\pi_1(\phi).$ Thus the composition is associate. For the identity morphism $id_B$, we have $\pi_1(id_B)=id_{\pi_1[B]}$. For, assume $[f]\in\pi_1[B]$. Then $\pi_1(id_B)([f])=[id_B(f)]=[f]=id_{\pi_1[B]}([f])$. This implies for each $\phi\in\mathcal{S}(A, B)$, $id_{\pi_1[B]}\pi_1(\phi)=\pi_1(id_B)\pi_1(\phi)=\pi_1(id_B\phi)=\pi_1(\phi)$. Similarly, we can see that $\pi_1(\phi)\pi_1(id_A)=\pi_1(\phi)$. Thus $\pi_1[\mathcal{P}(C(X, Y))]$ is a category in which objects are of the form $\pi_1[A]$,  and morphisms are $\pi_1(\phi)$, where $A\in \mathcal{P}(C(X, Y))$ and $\phi\in Mor(\mathcal{P}(C(X, Y))$. We will now apply these categories in the next result.
\begin{thm}
Let $X, Y$ be two topological spaces. Then the following statements hold.
\begin{enumerate}
\item The mappings $A\longrightarrow \pi_1[A]$ and $\phi\longrightarrow \pi_1(\phi)$ define a functor \[\pi_1: \mathcal{P}(C(X, Y))\longrightarrow\pi_1[\mathcal{P}(C(X, Y))].\]

\item If $X, Y$ are two homeomorphic spaces, then for each space $Z$ two categories $\mathcal{P}(C(Y, Z))$ and $\mathcal{P}(C(X, Z))$ are  isomorphic.

\item If $X, Y$ are two homeomorphic spaces, then for each space $Z$ two categories $\pi_1[\mathcal{P}(C(Y, Z))]$ and $\pi_1[\mathcal{P}(C(X, Z))]$ are isomorphic.
\end{enumerate}
\end{thm}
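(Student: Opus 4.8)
The plan is to treat the three parts in order, putting the real work into Part (2) and making Parts (1) and (3) follow by bookkeeping. For Part (1), I would just collect the verifications already carried out in the paragraph preceding the theorem: there it is shown that $\pi_1(\phi)\colon\pi_1[A]\to\pi_1[B]$ is a well-defined map for every morphism $\phi$ (because $\phi$ is an $h$-map), that $\pi_1(\psi\phi)=\pi_1(\psi)\pi_1(\phi)$, and that $\pi_1(\mathrm{id}_A)=\mathrm{id}_{\pi_1[A]}$. Since $A\mapsto\pi_1[A]$ sends objects to objects and these last two facts are exactly preservation of composition and of identities, the assignment $\pi_1$ satisfies the functor axioms, and nothing more is needed.

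For Part (2), fix a homeomorphism $h\colon X\to Y$ with inverse $k=h^{-1}\colon Y\to X$. The key lemma I would establish first is that precomposition with a homeomorphism both \emph{preserves and reflects} homotopy: for $f_1,f_2\in C(Y,Z)$ one has $f_1\simeq f_2$ if and only if $f_1 h\simeq f_2 h$. The forward implication pushes a homotopy $H\colon Y\times[0,1]\to Z$ through $h\times\mathrm{id}$, and the reverse pushes a homotopy through $k\times\mathrm{id}$, using $f_i h k=f_i$. Granting this, I define $F\colon\mathcal P(C(Y,Z))\to\mathcal P(C(X,Z))$ on objects by $F(A)=\{f h:f\in A\}$ and on a morphism $\phi\in\mathcal S(A,B)$ by $F(\phi)(f h)=\phi(f) h$. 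The map $F(\phi)$ is well defined because $h$ is a bijection (so $f_1 h=f_2 h$ forces $f_1=f_2$), and the key lemma transports each of the three defining conditions of a bi-homotopy from $\phi$ to $F(\phi)$: the $h$-map and homotopy-reflecting properties come from "$f_1 h\simeq f_2 h\iff f_1\simeq f_2$" applied before and after $\phi$, while the essential-surjectivity condition comes from the fact that $\phi(f)\simeq g$ yields $\phi(f) h\simeq g h$.

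Still within Part (2), I would verify that $F$ is a functor by the direct computations $F(\mathrm{id}_A)=\mathrm{id}_{F(A)}$ and $F(\phi\circ\psi)(f h)=\phi(\psi(f)) h=\bigl(F(\phi)\circ F(\psi)\bigr)(f h)$. Defining $G\colon\mathcal P(C(X,Z))\to\mathcal P(C(Y,Z))$ symmetrically through $k$, namely $G(B)=\{g k:g\in B\}$ and $G(\theta)(g k)=\theta(g) k$, the relations $h k=\mathrm{id}_Y$ and $k h=\mathrm{id}_X$ give $G\circ F=\mathrm{id}$ and $F\circ G=\mathrm{id}$ strictly (not merely up to homotopy), both on objects, since $f h k=f$, and on morphisms. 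Hence $F$ is an isomorphism of categories, so $\mathcal P(C(Y,Z))$ and $\mathcal P(C(X,Z))$ are isomorphic.

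For Part (3), I would transport the isomorphism $F$ of Part (2) across the functor $\pi_1$ of Part (1), setting $\widetilde F\bigl(\pi_1[A]\bigr)=\pi_1[F(A)]$ and $\widetilde F\bigl(\pi_1(\phi)\bigr)=\pi_1(F(\phi))$, with inverse $\widetilde G$ induced by $G$. The step I expect to be the main obstacle is \emph{well-definedness}: because the objects $\pi_1[A]$ and morphisms $\pi_1(\phi)$ of $\pi_1[\mathcal P(C(Y,Z))]$ are genuine quotients, $\widetilde F$ must be shown independent of the chosen representatives $A$ and $\phi$. This reduces once more to the key lemma of Part (2): the assignment $[g]\mapsto[g h]$ is a well-defined bijection on homotopy classes carrying $\pi_1[A]$ onto $\pi_1[F(A)]$, so $\pi_1[A]=\pi_1[A']$ forces $\pi_1[F(A)]=\pi_1[F(A')]$; likewise $\pi_1(\phi)=\pi_1(\phi')$ means $\phi(f)\simeq\phi'(f)$ for all $f$, whence $\phi(f) h\simeq\phi'(f) h$ and thus $\pi_1(F(\phi))=\pi_1(F(\phi'))$. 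Once well-definedness is secured, functoriality of $\widetilde F$ together with $\widetilde G\widetilde F=\mathrm{id}$ and $\widetilde F\widetilde G=\mathrm{id}$ follow formally from the corresponding facts for $F$, $G$, and $\pi_1$, giving the desired isomorphism.
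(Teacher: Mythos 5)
Your proposal is correct and follows essentially the same route as the paper: Part (1) by citing the verifications in the preceding paragraph, Part (2) by the precomposition functors $F(A)=\{fh:f\in A\}$, $F(\phi)(fh)=\phi(f)h$ and their inverses built from $h^{-1}$, and Part (3) by transporting $F$ along $\pi_1$. The only (welcome) refinement is that you explicitly isolate the homotopy preservation/reflection lemma and check well-definedness of $\widetilde F$ on morphisms, points the paper handles inline or leaves implicit.
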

\begin{proof}
(1) This follows from the comments before the theorem.

(2) Let $f: X\longrightarrow Y$ be the homeomorphism. We denote by $A_f$, the set of all $hf=h\circ f$, where $h\in A$. Define, \[F:\mathcal{P}(C(Y, Z))\longrightarrow\mathcal{P}(C(X, Z))\quad \text{by}\quad F(A)=A_f.\] We claim that $F$ is a functor. To see it, consider a morphism $\phi\in\mathcal{S}(A, B)$, where $A, B\in\mathcal{P}(C(Y, Z))$. Firstly, we must show that $F(\phi): F(A)\longrightarrow F(B)$ by $F(\phi)(hf)=\phi(h)f$ is a morphism (i.e., a bi-homotopy). Let $h_1f=h_2f$. Then $h_1=h_1ff^{-1}=h_2ff^{-1}=h_2$. Thus $\phi(h_1)f=\phi(h_2)f$. Consider  two elements $g_1f, g_2f\in F(A)=A_f$ such that $g_1f\simeq g_2f$. Then we have $g_1\simeq (g_1f)f^{-1}\simeq(g_2f)f^{-1}\simeq g_2$. Thus $\phi(g_1)\simeq\phi(g_2)$. This shows that: \[F(\phi)(g_1f)=\phi(g_1)f\simeq\phi(g_2)f=F(\phi)(g_2f).\] Now assume $F(\phi)(h_1f)\simeq F(\phi)(h_2f)$, for some $h_1, h_2\in A$. Then $\phi(h_1)f\simeq \phi(h_2)f$. Hence $\phi(h_1)\simeq \phi(h_1)ff^{-1}\simeq \phi(h_2)ff^{-1}\simeq \phi(h_2)$. This implies $h_1\simeq h_2$ and hence $h_1f\simeq h_2f$. Let $hf\in F(B)=B_f$. Since $\phi$ is a bi-homotopy from $A$ to $B$, there exists $g_1\in A$ such that $\phi(g_1)\simeq h$. This shows $\phi(g_1)f\simeq hf$, i.e., $F(\phi)(g_1f)\simeq hf$. Trivially, for each $A\in\mathcal{P}(C(Y, Z))$, $F(id_A)=id_{F(A)}$. Now, let $\phi_1\in \mathcal{S}(A, B)$ and $\phi_2\in\mathcal{S}(B, C)$. Then for $hf\in F(A)$ we have,
\[F(\phi_2\phi_1)(hf)=(\phi_2\phi_1)(h)f=\phi_2(\phi_1(h))f=F(\phi_2)(\phi_1(h)f)=\]\[F(\phi_2)(F(\phi_1)(hf))=(F(\phi_2)F(\phi_1))(hf).\] Thus $F$ is a covariant functor.
Now for each morphism $\phi\in Mor(\mathcal{P}(C(X, Z)))$, and $A, B\in \mathcal{P}(C(X, Z))$, we define,
\[G: \mathcal{P}(C(X, Z))\longrightarrow \mathcal{P}(C(Y, Z))\quad\text{by}\quad G(B)=B_{f^{-1}}\quad\text{and},\]\[\quad G(\phi): G(A)\longrightarrow G(B)\quad\text{by}\quad G(\phi)(hf^{-1})=\phi(h)f^{-1}.\] Then, similar to the proof of $F$, we can see that $G$ is a functor too. Moreover, $FG=Id_{\mathcal{P}(C(X))}$ and $GF=Id_{\mathcal{P}(C(Y))}$, where $Id$ is the identity functor.

(3) By hypothesis, there exists a homeomorphism $f: X\longrightarrow Y$. For our purpose define as in the proof of Part 1, $A_f=\{hf: h\in A\}$, and $B_{f^-1}=\{hf^{-1}: h\in B\}$, where $A\in \mathcal{P}(C(Y, Z))$ and $B\in \mathcal{P}(C(X, Z))$, respectively. For $\pi_1(\phi)\in Mor(\pi_1[\mathcal{P}(C(Y, Z))])$ and $A, B\in \mathcal{P}(C(Y, Z))$, define \[F: \pi_1[\mathcal{P}(C(Y, Z))]\longrightarrow\pi_1[\mathcal{P}(C(X, Z))]\quad \text{by}\quad F(\pi_1[A])=\pi_1[A_f]\quad\text{and},\]\[F(\pi_1(\phi)): \pi_1[A_f]\longrightarrow\pi_1[B_f]\quad\text{by}\quad F(\pi_1(\phi))([hf])=[\phi(h)f].\]  For two  $A, B\in \mathcal{P}(C(Y, Z))$, it is easy to see that $\pi_1[A]=\pi_1[B]$ implies $F(A)=\pi_1[A_f]=\pi_1[B_f]=F(B)$. Similar to the argument of Part 1,  we can see that $F(\pi_1(\phi))$ is a map. It is enough to show that  $F(\pi_1(\phi))$ is a morphism. In fact, we claim that $F(\pi_1(\phi))=\pi_1(F(\phi))$, where $F(\phi): A_f\longrightarrow B_f$ is defined by $F(\phi)(hf)=\phi(h)f$, hence this is a morphism. To see our claim, consider $[hf]\in A_f$. Then, $F(\pi_1(\phi))([hf])=[\phi(h)f]$ and $\pi_1(F(\phi))([hf])=[F(\phi)(hf)]=[\phi(h)f]$. Thus $F(\pi_1(\phi))([hf])=\pi_1(F(\phi))([hf])$. Now for each morphism $\phi\in Mor(\mathcal{P}(C(X, Z)))$, and $A, B\in \mathcal{P}(C(X, Z))$, we define,
\[G: \pi_1[\mathcal{P}(C(X, Z))]\longrightarrow \pi_1[\mathcal{P}(C(Y, Z))]\quad\text{by}\quad G(\pi_1[B])=\pi_1[B_{f^{-1}}]\quad\text{and},\]\[\quad G(\pi_1(\phi)): G(A)\longrightarrow G(B)\quad\text{by}\quad G(\phi)(hf^{-1})=\phi(h)f^{-1}.\] Then, similar to the proof of $F$, we can see that $G$ is a functor too. Moreover, $FG=Id_{\pi_1[{\mathcal{P}(C(X, Z))}]}$ and $GF=Id_{\pi_1[{\mathcal{P}(C(Y, Z))}]}$, where $Id$ is the identity functor.
\end{proof}

\section{group homotopy}
In this section, we study groups of continuous maps, which are of the form $C(X, Y)$, where $Y$ is a topological group. Let $*$ be the operation on $Y$.
For $\alpha, \beta\in C(X, Y)$ and $x\in X$, we define $(\alpha *\beta)(x)=\alpha(x) *\beta(x)$. Then we have $\alpha*\beta\in C(X, Y)$ and $(C(X, Y), *)$ is a group. Since, we have $\alpha *\beta=hk$, where $k: X\longrightarrow Y\times Y$ is $k(x)=(\alpha(x), \beta(x))$ and $h: Y\times Y\longrightarrow Y$ is $h(y_1, y_2)=y_1* y_2$. 
\begin{defn} Let $(G_1, o_1)$ and $(G_2, o_2)$ be two groups of continuous maps. An $h$-map $\phi$ from group $G_1$ to $G_2$ is called $h$-homomorphism if for each $f_1, f_2\in G_1$ we have 
\begin{equation*}
\phi(f_1 o_1 f_2)\simeq\phi(f_1) o_2 \phi(f_2).
\end{equation*}
\end{defn}
\begin{defn}
A bi-homotopy $\phi$ from group $(G_1, o_1)$ to group $(G_2, o_2)$ which satisfies in $\phi(f_{1}o_1 f_{2})\simeq \phi(f_{1})o_2 \phi(f_{2})$  for each $f_{1}, f_{2}\in G_1$ is called a group-homotopy. If there exist two group-homotopies $\phi$ from $G_1$ to $G_2$ and $\psi$ from $G_2$ to $G_1$, such that $\psi\phi(h)\simeq h$ for each $h\in G_1$ and $\phi\psi(h)\simeq h$ for each $h\in G_2$, then we say $G_1$ and $G_2$ are two homotopic groups and we denote it by $G_1\simeq^{g} G_2$.
\end{defn}
\begin{exam}\label{ol}
If $X, Y$ are two equivalent spaces and $Z$ is a topological group, then $C(X, Z)$ and $C(Y, Z))$ are two homotopic groups. For, assume $f: X\longrightarrow Y$ and $g: Y\longrightarrow X$ are two continuous maps such that $fg\simeq I_Y$ and $gf\simeq I_X$. It is enough to define $\phi: C(Y, Z)\longrightarrow C(X, Z)$ by $\phi(\alpha)=\alpha f$ and $\psi: C(X, Z)\longrightarrow C(Y, Z)$ by $\psi(\beta)=\beta g$. Then, it is easy to see that $\phi$ and $\psi$ are two group homotopies, $\phi\psi(\alpha)\simeq \alpha$ for each $\alpha\in C(Y, Z)$ and $\psi\phi(\beta)\simeq\beta$ for each $\beta\in C(X, Z)$, respectively.  Thus, $C(X,Z)\simeq^g C(Y, Z)$.
\end{exam}
\begin{exam} 
Let $X$ be a contractible space and $Y$ be a topological group (e.g., $Y=S^1$). Then $X$ is equivalent to the one point subset of itself say $\{x_0\}$. Thus $C(X, Y)\simeq^g C(\{x_0\}, Y)=Y$, by Example \ref{ol}.
\end{exam}
\begin{lem}
The composition of two $h$-homomorphisms is an $h$-homomorphism.
\end{lem}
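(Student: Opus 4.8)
The plan is to take two $h$-homomorphisms $\phi\colon (G_1, o_1)\longrightarrow (G_2, o_2)$ and $\psi\colon (G_2, o_2)\longrightarrow (G_3, o_3)$ and to verify that the composite $\psi\circ\phi\colon (G_1, o_1)\longrightarrow (G_3, o_3)$ satisfies the two requirements in the definition of an $h$-homomorphism: namely that it is an $h$-map, and that it carries $o_1$-products to $o_3$-products up to homotopy.

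For the first requirement I would simply chain the $h$-map property twice. If $f\simeq g$ in $G_1$, then $\phi(f)\simeq\phi(g)$ because $\phi$ is an $h$-map, and applying the $h$-map $\psi$ gives $\psi(\phi(f))\simeq\psi(\phi(g))$; hence $(\psi\circ\phi)(f)\simeq(\psi\circ\phi)(g)$. This is precisely the argument already used in the opening lines of Lemma \ref{rang}, so it requires no new idea.

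The substantive step is multiplicativity up to homotopy. Fix $f_1, f_2\in G_1$. Since $\phi$ is an $h$-homomorphism, $\phi(f_1 o_1 f_2)\simeq\phi(f_1) o_2 \phi(f_2)$. I would then push this homotopy through $\psi$: because $\psi$ is an $h$-map, homotopic inputs have homotopic images, so $\psi(\phi(f_1 o_1 f_2))\simeq\psi(\phi(f_1) o_2 \phi(f_2))$. Finally, applying the $h$-homomorphism property of $\psi$ to the pair $\phi(f_1), \phi(f_2)\in G_2$ yields $\psi(\phi(f_1) o_2 \phi(f_2))\simeq\psi(\phi(f_1)) o_3 \psi(\phi(f_2))$. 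Transitivity of homotopy then chains these three relations into
\[(\psi\circ\phi)(f_1 o_1 f_2)\simeq(\psi\circ\phi)(f_1)\, o_3\, (\psi\circ\phi)(f_2),\]
which is the desired identity.

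I do not expect a genuine obstacle here; the only point deserving attention is that the outer map $\psi$ must be invoked as an $h$-map (not merely as a homomorphism) in order to transport the homotopy $\phi(f_1 o_1 f_2)\simeq\phi(f_1) o_2 \phi(f_2)$ forward, and that transitivity of $\simeq$ is what licenses combining the three homotopies into a single chain.
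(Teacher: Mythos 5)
Your argument is correct and is essentially identical to the paper's proof: the $h$-map property of the composite is obtained by chaining the $h$-map property twice, and multiplicativity up to homotopy follows by transporting the homotopy $\phi(f_1 o_1 f_2)\simeq\phi(f_1)\,o_2\,\phi(f_2)$ through the $h$-map $\psi$ and then applying the $h$-homomorphism property of $\psi$. Your explicit remark that $\psi$ must be used as an $h$-map to push the first homotopy forward is precisely the (tacit) key step in the paper's chain of homotopies.
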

\begin{proof}
Let $(G_1, o_1), (G_2, o_2)$ and $(G_3, o_3)$ be three groups and $\phi_1: G_1\longrightarrow G_2$, $\phi_2: G_2\longrightarrow G_3$ be two $h$-homomorphisms. By Lemma \ref{rang}, $\phi_2\phi_1: G_1\longrightarrow G_3$ is an $h$-map. Now let $f_1, f_2\in G_1$. Then by the hypothesis, \[(\phi_2 \phi_1)(f_1\circ_1 f_2)=\phi_2(\phi_1(f_1\circ_1 f_2))\simeq \phi_2(\phi_1(f_1)\circ_2 \phi_1(f_2))\simeq \phi_2(\phi_1(f_1))\circ_3 \phi_2(\phi_1(f_1)).\] Thus $\phi_2\phi_1$ is an $h$-homomorphism.
\end{proof}
By using the above lemma, we can introduce the category of groups of continuous maps which is denoted by $\mathbb{CGC}$. In this category objects are groups of continuous maps and morphisms are $h$-homomorphisms between any two objects. Let $H(G_1, G_2)$ be the set of all $h$-homomorphisms between two groups $G_1$ and $G_2$. For each group $G$ of continuous maps, $id_G\in H(G, G)$, is the identity morphism. The composition law for each triple $G_1, G_2$ and $G_3$ is defined as follows; \[\circ: H(G_1, G_2)\times H(G_2, G_3)\longrightarrow H(G_1, G_3)\quad\text{by}\quad \circ(\psi, \phi)=\phi\circ\psi,\] where $\psi\in H(G_1, G_2)$ and $\phi\in H(G_2, G_3)$. Trivially composition is associated. Also, for $\phi\in H(G_1, G_2)$, $id_{G_2}\circ \phi=\phi$ and $\phi\circ id_{G_1}=\phi$. 
\begin{Rem}
If $f_1\simeq f_2$ and $g_1\simeq g_2$ are elements of the group $(C(X, Y), o)$ with $o$ is the operation on $Y$ and $Y$ is a topological group, then $f_1o g_1\simeq f_2 o g_2$. For, assume $F: X\times I\longrightarrow Y$ and $G: X\times I\longrightarrow Y$ are homotopies between $f_1, g_1$ and $f_2, g_2$, respectively. Then $H: X\times I\longrightarrow Y$ by $H(x, t)=F(x, t) o G(x, t)$ is a homotopy between $f_1 o g_1$ and $f_2 o g_2$. For continuity of $H$, it is enough to consider the map $H_1: X\times I\longrightarrow Y\times Y$ by $H_1(x, t)=(F(x, t), G(x, t))$. Then $H_1$ is continuous. On the other hand, the map $h: Y\times Y\longrightarrow Y$ by $h(x, y)=x o y$ is continuous. Hence $H=h\circ H_1$ is continuous.  We use this fact to show that,
if $(G, o)$ is a group of continuous maps, then $\pi_1[G]$ is a group too. In fact, it is enough to define $o^{\prime}$ on $\pi_1[G]$ by
$[f] o^{\prime}[g]=[fo g].$ Then, it is easy to see that $(\pi_1[G], o^{\prime})$ is a group. 
\end{Rem}
For each group $(G, o)$ of continuous maps, define $P_{G}: G\longrightarrow \pi_1[G]$ by $P_G(g)=[g]$. Then for each $g_1, g_2\in G$, we have $P_G(g_1 o g_2)=[g_1o g_2]=[g_1] o^{\prime} [g_2]=P_G(g_1) o^{\prime} P_G(g_2)$. This equality  shows that $P_G$ is a group homomorphism. On the other hand, $ker\phi=\{g\in G: g\simeq o\}$ is a normal subgroup of $G$. Thus we have $G/ker\phi\cong\pi_1[G]$. This shows that $\pi_1[G]$ is a quotient group of $G$.  We will use this map in the following result.
\begin{thm}\label{gol}
Let $(G_1, o_1), (G_2, o_2)$ be two groups of continuous maps and $\phi: G_1\longrightarrow G_2$ be a map. Then the following statements hold.
\begin{enumerate}
\item $\pi_1(\phi)$ is a group homomorphism \ifif $\phi$ is an $h$-homomorphism.

\item $\pi_1(\phi)$  is a group isomorphism \ifif $\phi$ is a group-homotopy.

\item Let $\phi$ be an $h$-map which is a group homomorphism. Then the following diagram of group homomorphisms commutes.

\[\begin{tikzcd}
G_1 \arrow[r, red, "\phi"] \arrow[d, blue, "P_{G_1}"]
& G_2\arrow[d, red, "P_{G_2}"] \\
\pi_1[G_1] \arrow[r, blue,  "\pi_1(\phi)" blue]
&  \pi_1[G_2]
\end{tikzcd}\]
\end{enumerate}
\end{thm}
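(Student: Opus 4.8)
The plan is to reduce all three parts to a single principle: the quotient map $P_{G_i}\colon G_i\longrightarrow\pi_1[G_i]$ converts the relation $\simeq$ into genuine equality, so that every homotopy-level hypothesis on $\phi$ becomes an algebraic assertion about $\pi_1(\phi)$ on the groups $\pi_1[G_1]$ and $\pi_1[G_2]$. With that dictionary in hand, each equivalence is a short translation rather than a separate construction.

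For Part (1) I would first record the well-definedness observation already used for morphisms in Section 2: the assignment $[f]\mapsto[\phi(f)]$ is single-valued precisely when $f_1\simeq f_2$ forces $\phi(f_1)\simeq\phi(f_2)$, i.e.\ exactly when $\phi$ is an $h$-map. Granting that $\pi_1(\phi)$ is well defined, I would then compute both sides of the homomorphism identity. Using the quotient operation $[a]\,o_1'\,[b]=[a\,o_1\,b]$ one gets $\pi_1(\phi)([f_1]\,o_1'\,[f_2])=[\phi(f_1\,o_1\,f_2)]$, while $\pi_1(\phi)([f_1])\,o_2'\,\pi_1(\phi)([f_2])=[\phi(f_1)\,o_2\,\phi(f_2)]$. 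Equality of these two classes is, by definition of the quotient operation, the statement $\phi(f_1\,o_1\,f_2)\simeq\phi(f_1)\,o_2\,\phi(f_2)$. Reading the chain forwards yields ``$h$-homomorphism $\Rightarrow$ homomorphism,'' and reading it backwards (together with the well-definedness clause, which any homomorphism presupposes) yields the converse.

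For Part (2) I would combine Part (1) with the earlier proposition on bi-homotopies. The map built there, sending $[f]$ to $[\phi(f)]$, is literally $\pi_1(\phi)$, and it is shown bijective whenever $\phi$ is a bi-homotopy; conversely, injectivity of $\pi_1(\phi)$ is exactly the condition ``$\phi(f)\simeq\phi(g)\Rightarrow f\simeq g$'' and surjectivity is exactly ``for each $g\in G_2$ there is $f$ with $\phi(f)\simeq g$,'' so $\pi_1(\phi)$ is a bijection \ifif $\phi$ is a bi-homotopy. Since a group-homotopy is by definition a bi-homotopy that additionally satisfies $\phi(f_1\,o_1\,f_2)\simeq\phi(f_1)\,o_2\,\phi(f_2)$, I would finish by stacking the two equivalences: $\phi$ is a group-homotopy \ifif $\pi_1(\phi)$ is both bijective (the bi-homotopy half) and a homomorphism (Part (1)), that is, \ifif $\pi_1(\phi)$ is a group isomorphism. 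For Part (3) the argument is a one-line diagram chase: an equality $\phi(f_1\,o_1\,f_2)=\phi(f_1)\,o_2\,\phi(f_2)$ is in particular a homotopy, so an $h$-map that is a genuine group homomorphism is an $h$-homomorphism, and Part (1) makes $\pi_1(\phi)$ a homomorphism, so all four arrows are homomorphisms; evaluating at $g\in G_1$ gives $P_{G_2}(\phi(g))=[\phi(g)]=\pi_1(\phi)([g])=\pi_1(\phi)(P_{G_1}(g))$, which is commutativity.

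The only genuinely delicate point, and the step I would state most carefully, is the well-definedness hinge in Part (1): the phrase ``$\pi_1(\phi)$ is a group homomorphism'' silently assumes $\pi_1(\phi)$ is a function at all, so one must make explicit that this forces $\phi$ to be an $h$-map before the algebraic computation is even meaningful. Once that is isolated, the remaining work in all three parts is the routine bookkeeping of passing $\simeq$ to $=$ through the quotient maps $P_{G_1}$ and $P_{G_2}$.
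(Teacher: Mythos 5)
Your proposal is correct and follows essentially the same route as the paper: in both, the quotient maps turn each homotopy-level condition on $\phi$ (well-definedness $=$ $h$-map, injectivity/surjectivity $=$ the two bi-homotopy clauses, the homomorphism identity $=$ the $h$-homomorphism condition) into the corresponding algebraic property of $\pi_1(\phi)$, and Part (3) is the same one-line chase. Your only organizational difference is citing the earlier bijectivity proposition for Part (2) instead of re-deriving it inline, and your explicit flagging of the well-definedness hinge is a point the paper also makes, just less prominently.
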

\begin{proof}
(1) $\Longrightarrow$ Let $f_1, f_2\in G_1$ and $f_1\simeq f_2$. Then $[f_1]=[f_2]$. Since $\pi_1(\phi)$ is a map, $[\phi(f_1)]=\pi_1(\phi)([f_1])=\pi_1(\phi)([f_2])=[\phi(f_2)]$. Thus $\phi(f_1)\simeq\phi(f_2)$. Thus $\phi$ is an $h$-map. To show $\phi$ is an $h$-homomorphism, consider $f_1, f_2\in G_1$. Then $[\phi(f_1 o_1 f_2)]=\pi_1(\phi)([f_1 o_1 f_2])=\pi_1(\phi)([f_1]) o^{\prime}_2\pi_1(\phi)([f_2])=[\phi(f_1)] o^{\prime}_2[\phi(f_2)]=[\phi(f_1) o_2\phi(f_2)]$, by the hypothesis. Thus $\phi(f_1 o_1 f_2)\simeq \phi(f_1) o_2\phi(f_2)$.

$\Leftarrow$ First, assume $[f_1], [f_2]\in \pi_1[G_1]$ and $[f_1]=[f_2]$. Then $f_1\simeq f_2$. By the hypothesis, $\phi(f_1)\simeq\phi(f_2)$, i.e., $\pi_1(\phi)([f_1])=[\phi(f_1)]=[\phi(f_2)]=\pi_1(\phi)([f_2])$. Next,  let $[g_1], [g_2]\in \pi_1[G_1]$. Then by the hypothesis we have,\[\pi_1(\phi)([g_1]o_1^{\prime} [g_2])=\pi_1(\phi)([g_1 o_1 g_2])=[\phi(g_1 o_1 g_2)]=\]\[[\phi(g_1) o_2 \phi(g_2)]=[\phi(g_1)]o_2^{\prime} [\phi(g_2)]=\pi_1(\phi)([g_1]) o_2^{\prime} \pi_1(\phi)([g_2]).\]

(2) $\Longrightarrow$ By Part (1), $\phi$ is an $h$-map.  Consider two elements $f_1, f_2\in G_1$ with $\phi(f_1)\simeq\phi(f_2)$. Then $\pi_1(\phi)([f_1])=[\phi(f_1)]=[\phi(f_2)]=\pi_1(\phi)([f_2]).$ By the hypothesis, $[f_1]=[f_2]$, i.e., $f_1\simeq f_2$. Now let $g\in G_2$. Then $[g]\in \pi_1[G_2]$. By the hypothesis, there exists $[h]\in\pi_1[G_1]$ (hence, $h\in G_1$) such that $\phi(h)=\pi_1(\phi)([h])=[g]$. This shows that $\phi(h)\simeq g$. Hence $\phi$ is a bi-homotopy. Finally, choose $f_1, f_2\in G_1$. By the hypothesis, we have \[[\phi(f_1 o_1 f_2)]=\pi_1(\phi)(f_1o_1 f_2)=\pi_1(\phi)(f_1)o_2^{\prime}\pi_1(\phi)(f_2)=[\phi(f_1)]o_2^{\prime}[\phi(f_2)]=[\phi(f_1)o_2\phi(f_2)].\] This implies $\phi(f_1o_1 f_2)\simeq \phi(f_1)o_2 \phi(f_2)$.

$\Leftarrow$ Let  $[h_1]=[h_2]$, where $h_1, h_2\in G_1$. Then we have $h_1\simeq h_2$. Since $\phi$ is a bi-homotopy, $\phi(h_1)\simeq \phi(h_2)$. Thus $\pi_1(\phi)([h_1])=[\phi(h_1)]=[\phi(h_2)]=\pi_1(\phi)([h_2])$. Now let $\pi_1(\phi)([f_1])=\pi_1(\phi)([f_2])$, where $f_1, f_2\in G_1$. Then $[\phi(f_1)]=[\phi(f_2)]$. This implies $\phi(f_1)\simeq\phi(f_2)$. Hence $f_1\simeq f_2$. Thus $[f_1]=[f_2]$. This shows $\pi_1(\phi)$ is one-one. Let $[g]\in \pi_1[G_2]$. Then $g\in G_2$. Thus,  there exists an $f\in A$ such that $\phi(f)\simeq g$. Thus $\pi_1(\phi)(f)=[\phi(f)]=[g]$. This says $\pi_1(\phi)$ is an onto map. It is enough to show that $\pi_1(\phi)$ is a group homomorphism. To see it, assume $[f_1], [f_2]\in \pi_1[G_1]$.  By the hypothesis, $\phi(f_1o_1f_2)\simeq \phi(f_1)o_2\phi(f_2)$. Thus we have \[\pi_1(\phi)([f_1]o_1^{\prime}[f_2])=\pi_1(\phi)([f_1o_1 f_2])=[\phi(f_1o_1 f_2)]=[\phi(f_1)o_2\phi(f_2)]=\]\[[\phi(f_1)]o_2^{\prime}[\phi(f_2)]=\pi_1(\phi)([f_1])o_2^{\prime}\pi_1(\phi)([f_2]).\]
(3) Trivial.
\end{proof}
Part 2 of the above Theorem implies the next result.
\begin{cor}
If $G_1, G_2$ are two homotopic groups of continuous functions, then $\pi_1[G_1]$ and $\pi_1[G_2]$ are two  isomorphic groups.
\end{cor}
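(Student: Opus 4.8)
The plan is to extract a single group-homotopy from the hypothesis and feed it directly into Part 2 of Theorem~\ref{gol}, which already does all the substantive work. By the definition of homotopic groups, the assumption $G_1\simeq^g G_2$ guarantees (among other data) the existence of at least one group-homotopy $\phi\colon G_1\longrightarrow G_2$; the witnessing maps $\psi$ and the homotopies $\psi\phi(h)\simeq h$, $\phi\psi(h)\simeq h$ are not even needed for the conclusion, so I would simply fix such a $\phi$ and discard the rest.

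The key step is then a one-line invocation: since $\phi$ is a group-homotopy, Part~(2) of Theorem~\ref{gol} asserts precisely that the induced map $\pi_1(\phi)\colon \pi_1[G_1]\longrightarrow\pi_1[G_2]$, defined by $\pi_1(\phi)([f])=[\phi(f)]$, is a group isomorphism. The existence of a group isomorphism between $\pi_1[G_1]$ and $\pi_1[G_2]$ is exactly the assertion that these two groups are isomorphic, so the corollary follows immediately. I would write $\pi_1[G_1]\cong\pi_1[G_2]$ and close the argument there.

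There is essentially no obstacle at the level of this corollary: all the real content was already discharged inside Theorem~\ref{gol}, where one verifies that $\pi_1(\phi)$ is well defined on homotopy classes, injective (using that $\phi$ is a bi-homotopy, so $\phi(f_1)\simeq\phi(f_2)$ forces $f_1\simeq f_2$), surjective (using the second bi-homotopy condition), and multiplicative (using the $h$-homomorphism identity $\phi(f_1\,o_1\,f_2)\simeq\phi(f_1)\,o_2\,\phi(f_2)$ together with the well-definedness of $o'$ on $\pi_1[G]$). If I wanted the proof to be self-contained rather than a citation, the only genuine point worth re-emphasizing is that the group structure $o'$ on each $\pi_1[G]$ is well defined, which is the Remark preceding Theorem~\ref{gol} (if $f_1\simeq f_2$ and $g_1\simeq g_2$ then $f_1\,o\,g_1\simeq f_2\,o\,g_2$); but since that is already established, a clean proof just records the hypothesis, names $\phi$, and cites Theorem~\ref{gol}(2).
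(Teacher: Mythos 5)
Your proposal is correct and matches the paper's own argument, which likewise just invokes Part (2) of Theorem~3.8: the hypothesis $G_1\simeq^g G_2$ supplies a group-homotopy $\phi\colon G_1\longrightarrow G_2$, and that theorem says $\pi_1(\phi)$ is a group isomorphism. Your observation that the second map $\psi$ and the composition homotopies are not needed is accurate and a slight sharpening of what the paper states.
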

Whenever $X, Y$ are two equivalent spaces and $Z$ is a topological group, $C(X, Z)\simeq^g C(Y, Z)$, by Example \ref{ol}. By using the above result, we obtain the following result.
\begin{cor}
If $X, Y$ are two equivalent spaces and $Z$ is a topological group, then $\pi_1(C(X, Z))$ and $\pi_1(C(Y, Z))$ are two isomorphic groups.
\end{cor}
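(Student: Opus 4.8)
The plan is to obtain the result by simply chaining the two results immediately preceding it, since the statement is a direct corollary of them. First I would note that the hypotheses are exactly those of Example \ref{ol}: $X$ and $Y$ are equivalent spaces and $Z$ is a topological group. Hence Example \ref{ol} applies verbatim and yields $C(X, Z) \simeq^{g} C(Y, Z)$; that is, the two groups of continuous maps are homotopic. Recall that Example \ref{ol} supplies this homotopy explicitly: if $f: X \longrightarrow Y$ and $g: Y \longrightarrow X$ are continuous maps witnessing the equivalence (so $fg \simeq I_Y$ and $gf \simeq I_X$), then $\phi: C(Y, Z) \longrightarrow C(X, Z)$ given by $\phi(\alpha) = \alpha f$ and $\psi: C(X, Z) \longrightarrow C(Y, Z)$ given by $\psi(\beta) = \beta g$ are group-homotopies satisfying $\phi\psi(\alpha) \simeq \alpha$ for each $\alpha \in C(Y, Z)$ and $\psi\phi(\beta) \simeq \beta$ for each $\beta \in C(X, Z)$.

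Second, I would invoke the corollary just established, namely that homotopic groups of continuous functions have isomorphic $\pi_1$-quotients, applied to $G_1 = C(X, Z)$ and $G_2 = C(Y, Z)$. This conclusion is exactly the assertion that $\pi_1[C(X, Z)]$ and $\pi_1[C(Y, Z)]$ are isomorphic groups, so the proof is complete in essentially two lines.

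If a self-contained argument is preferred, bypassing the intermediate corollary, I would instead push the explicit map through Theorem \ref{gol}. Taking $\phi: C(Y, Z) \longrightarrow C(X, Z)$, $\phi(\alpha) = \alpha f$ as above, Example \ref{ol} guarantees that $\phi$ is a group-homotopy, and then Part (2) of Theorem \ref{gol} gives at once that the induced map $\pi_1(\phi): \pi_1[C(Y, Z)] \longrightarrow \pi_1[C(X, Z)]$ is a group isomorphism, furnishing the desired isomorphism concretely. Since every ingredient has already been proved, there is no genuine obstacle here; the only point requiring care is bookkeeping, making sure the directions of $f, g$ and of the induced $\phi, \psi$ are kept consistent and that the hypotheses of Example \ref{ol} match those of the corollary, which they do.
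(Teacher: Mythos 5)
Your proposal is correct and matches the paper's own argument exactly: the paper likewise obtains $C(X,Z)\simeq^{g} C(Y,Z)$ from Example \ref{ol} and then invokes the immediately preceding corollary (homotopic groups have isomorphic $\pi_1$-quotients) to conclude. Your alternative route through Part (2) of Theorem \ref{gol} is just an unwinding of that same corollary, so nothing essentially different is involved.
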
 
Let $\pi_1[\mathbb{CGC}] = \{\pi_1[G]: G$ is a group of continuous functions$\}$. Theorem \ref{gol}, along with the fact that for each $\phi, \psi \in \text{Mor}(\mathbb{CGC})$, we have $\pi_1(\phi\psi) = \pi_1(\phi)\pi_1(\psi)$, imply the following result.
\begin{cor}
The following statements hold.
\begin{enumerate}
\item  $\pi_1[\mathbb{CGC}]$ is a subcategory of the category of groups.

\item The mappings $G\longrightarrow \pi_1[G]$ and $\phi\longrightarrow\pi_1(\phi)$ define a functor 
\begin{equation*}
\pi_1: \mathbb{CGC}\longrightarrow\pi_1[\mathbb{CGC}].
\end{equation*}
\end{enumerate}
\end{cor}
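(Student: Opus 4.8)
The plan is to obtain both parts as immediate consequences of the structural facts already in hand, since at this point only the category axioms for $\pi_1[\mathbb{CGC}]$ and the functor axioms for $\pi_1$ remain to be checked, and every ingredient needed has been isolated: the group structure on $\pi_1[G]$ from the Remark preceding Theorem \ref{gol}, the homomorphism property of $\pi_1(\phi)$ from Part (1) of Theorem \ref{gol}, the identity relation $\pi_1(id_G)=id_{\pi_1[G]}$, and the composition relation $\pi_1(\phi\psi)=\pi_1(\phi)\pi_1(\psi)$.

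For Part (1), I would argue that $\pi_1[\mathbb{CGC}]$ sits inside the category of groups by checking the defining features of a subcategory one at a time. The objects $\pi_1[G]$ are groups under $[f]\,o^{\prime}\,[g]=[fog]$, so they are objects of the category of groups. Each morphism is of the form $\pi_1(\phi)$ for an $h$-homomorphism $\phi$, hence a group homomorphism by Part (1) of Theorem \ref{gol}, so the morphisms are morphisms of the category of groups. The identity morphism of $\pi_1[G]$ is present because $\pi_1(id_G)=id_{\pi_1[G]}$ is precisely the identity homomorphism. Finally, for $h$-homomorphisms $\phi,\psi$ the lemma on composition of $h$-homomorphisms shows $\phi\psi$ is again an $h$-homomorphism, and $\pi_1(\phi\psi)=\pi_1(\phi)\pi_1(\psi)$ shows the composite of two morphisms of $\pi_1[\mathbb{CGC}]$ is again a morphism of $\pi_1[\mathbb{CGC}]$ and coincides with composition of group homomorphisms. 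Together these say $\pi_1[\mathbb{CGC}]$ is a subcategory of the category of groups.

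For Part (2), I would verify the functor axioms directly for $G\mapsto\pi_1[G]$ and $\phi\mapsto\pi_1(\phi)$: well-definedness on objects is the group structure from the Remark, well-definedness on morphisms and respect for source and target follow from $\pi_1(\phi)\colon\pi_1[G_1]\to\pi_1[G_2]$ being a group homomorphism, preservation of identities is $\pi_1(id_G)=id_{\pi_1[G]}$, and preservation of composition is $\pi_1(\phi\psi)=\pi_1(\phi)\pi_1(\psi)$.

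The only computation that still needs recording is the composition relation itself, which is a one-line evaluation on a representative: for $[f]\in\pi_1[G_1]$,
\[
\pi_1(\phi)\pi_1(\psi)([f])=\pi_1(\phi)([\psi(f)])=[\phi(\psi(f))]=[(\phi\psi)(f)]=\pi_1(\phi\psi)([f]),
\]
mirroring the verification already carried out for $\mathcal{P}(C(X,Y))$ in Section 2. I therefore expect no genuine difficulty; the one point to watch is bookkeeping, namely confirming that the composition convention $\circ(\psi,\phi)=\phi\circ\psi$ fixed when $\mathbb{CGC}$ was defined matches the function-composition order used above, so that $\pi_1$ comes out covariant.
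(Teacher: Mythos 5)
Your proposal is correct and takes essentially the same route as the paper: the paper offers no separate proof, deriving the corollary in one sentence from Theorem \ref{gol} together with the relation $\pi_1(\phi\psi)=\pi_1(\phi)\pi_1(\psi)$, and your verification of the subcategory and functor axioms (group structure on $\pi_1[G]$ from the Remark, the homomorphism property of $\pi_1(\phi)$ from Theorem \ref{gol}(1), the lemma on composition of $h$-homomorphisms, and $\pi_1(id_G)=id_{\pi_1[G]}$) is exactly that intended argument written out in full. Your closing remark about matching the composition convention $\circ(\psi,\phi)=\phi\circ\psi$ so that $\pi_1$ is covariant is a sensible bookkeeping check and raises no difficulty.
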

Let $f: X\longrightarrow Y$ and $g: X\longrightarrow Z$ be two continuous maps. Then we have the continuous  map $((f, g)):X\longrightarrow Y\times Z$ by $((f, g))(x)=(f(x), g(x))$. 
\begin{lem}\label{tiam}
Let $f_1, g_1\in C(X, Y)$ and $f_2, g_2\in C(X, Z)$. Then $f_1\simeq g_1$ and $f_2\simeq g_2$ \ifif $((f_1, f_2))\simeq ((g_1, g_2)): X\longrightarrow Y\times Z$.
\end{lem}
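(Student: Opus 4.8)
The plan is to reduce everything to the universal property of the product topology, namely that a map $H$ into $Y\times Z$ is continuous if and only if the two compositions $p_Y\circ H$ and $p_Z\circ H$ with the projections $p_Y\colon Y\times Z\to Y$ and $p_Z\colon Y\times Z\to Z$ are continuous. This single fact handles both implications once one observes that homotopy is stably preserved under composition with fixed continuous maps (both on the source and, via projections, on the target).

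For the forward direction, I would assume $f_1\simeq g_1$ and $f_2\simeq g_2$, so that there exist continuous homotopies $F\colon X\times I\to Y$ with $F(x,0)=f_1(x)$, $F(x,1)=g_1(x)$, and $G\colon X\times I\to Z$ with $G(x,0)=f_2(x)$, $G(x,1)=g_2(x)$. I would then define $H\colon X\times I\to Y\times Z$ by $H(x,t)=(F(x,t),G(x,t))$, i.e. $H=((F,G))$. Continuity of $H$ follows from the product universal property, since $p_Y\circ H=F$ and $p_Z\circ H=G$ are continuous. Finally I would read off the endpoints: $H(x,0)=(f_1(x),f_2(x))=((f_1,f_2))(x)$ and $H(x,1)=(g_1(x),g_2(x))=((g_1,g_2))(x)$, so $H$ is the desired homotopy $((f_1,f_2))\simeq((g_1,g_2))$.

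For the reverse direction, I would start from a continuous homotopy $H\colon X\times I\to Y\times Z$ with $H(\cdot,0)=((f_1,f_2))$ and $H(\cdot,1)=((g_1,g_2))$, and push it forward along the two projections. Setting $F=p_Y\circ H$ and $G=p_Z\circ H$, each is continuous as a composite of continuous maps. Evaluating at the endpoints gives $F(x,0)=p_Y((f_1,f_2)(x))=f_1(x)$, $F(x,1)=g_1(x)$, and likewise $G(x,0)=f_2(x)$, $G(x,1)=g_2(x)$. Hence $F$ witnesses $f_1\simeq g_1$ and $G$ witnesses $f_2\simeq g_2$.

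I do not expect a genuine obstacle here; the only point requiring care is the continuity bookkeeping, and this is exactly what the product universal property delivers for free in both directions. The argument is in fact symmetric: the forward direction uses the pairing $((F,G))$, while the reverse direction uses the projections, and these are precisely the two halves of the statement characterizing continuous maps into a product.
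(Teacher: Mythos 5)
Your proof is correct and follows essentially the same route as the paper: pairing the two homotopies into $H(x,t)=(F(x,t),G(x,t))$ for the forward direction and composing with the projections $P_1,P_2$ for the converse. The only difference is that you make the continuity justification via the universal property of the product explicit, which the paper leaves implicit.
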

\begin{proof}
Let $F: X\times I\longrightarrow Y$ be a homotopy between $f_1, g_1$ and $G: X\times I\longrightarrow Z$ be a homotopy between $f_2, g_2$. Define $H: X\times I\longrightarrow Y\times Z$ by $H(x, t)=(F(x, t), G(z, t))$. Then, $H$ is a homotopy between $((f_1, f_2))$ and $((g_1, g_2))$. 

Conversely, let $H:X\times I\longrightarrow Y\times Z$ be a homotopy between $((f_1, f_2))$ and $((g_1, g_2))$. Then $P_1H$ and $P_2H$ ($P_1, P_2$ are projection maps) are two homotopies between $f_1, g_1$ and $f_2, g_2$, respectively. 
\end{proof}
Now, we apply Lemma \ref{tiam} to find an example of an $h$-homomorphism which is not a group homotopy.
\begin{exam}
Define $\phi: C(S^1, S^1\times S^1)\longrightarrow C(S^1, S^1)$ by $\phi(f)=P_1f$, where $P_1$ is the projection map. Then $f\simeq g$ in $C(S^1, S^1\times S^1)$ implies $P_1f\simeq P_1g$, i.e., $\phi(f)\simeq\phi(g)$. Thus $\phi$ is an $h$-map. For $f, g\in C(S^1, S^1\times S^1)$, it is easy to see that $P_1(f*_1g)=P_1f* P_1g$, where $*_1$ is the operation on $S^1\times S^1$ and $*$ is the operation on $S^1$. This shows that $\phi$ is an $h$-homomorphism. However, this map is not a group homotopy (in fact, it is not a bi-homotopy). To see it, consider two continuous maps $f, g: S^1\longrightarrow S^1$ such that $f\simeq g$. Since, the identity map (i.e., $I_{S^{1}}$) on $S^1$ is not homotopic to the constant map $c$ on it. Thus two maps $f_1=((f, I_{S^{1}}))$ and $g_1=((g, c))$ are not homotopic in $C(S^1, S^1\times S^1)$, by Lemma \ref{tiam}. But $\phi(f_1)=P_1f_1=f\simeq g=P_1g_1=\phi(g_1)$. 
\end{exam}
Let $*_1$ and $*_2$ be two operations on two groups $C(X, Y)$ and $C(X, Z)$, respectively. Then, for $f_1, g_1\in C(X, Y)$ and $f_2, g_2\in C(X, Z)$, we have the operation $*$ on the product group $\pi_1[C(X, Y)]\times\pi_1[C(X, Z)]$ as the following.
\begin{equation*}
([f_1], [g_1])* ([f_2], [g_2])=([f_1]*_1^{\prime} [f_2], [g_1]*_2^{\prime} [g_2])=([f_1 *_1 f_2], [g_1 *_2 g_2]).
\end{equation*}
\begin{thm}
Let $X$ be a topological space and $Y, Z$ be two topological groups. Then \[\pi_1[C(X, Y\times Z)]\cong\pi_1[C(X, Y)]\times\pi_1[C(X, Z)].\]
\end{thm}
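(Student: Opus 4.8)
The plan is to build an explicit isomorphism out of the coordinate projections $P_1: Y\times Z\longrightarrow Y$ and $P_2: Y\times Z\longrightarrow Z$. The guiding observation is that every $f\in C(X, Y\times Z)$ is recovered from its coordinates as $f=((P_1f, P_2f))$, so as a group $C(X, Y\times Z)$ is just the coordinatewise product of $C(X, Y)$ and $C(X, Z)$. Accordingly I would define
\[
\Phi: \pi_1[C(X, Y\times Z)]\longrightarrow \pi_1[C(X, Y)]\times\pi_1[C(X, Z)]\quad\text{by}\quad \Phi([f])=([P_1f],\,[P_2f]),
\]
and show that $\Phi$ is a well-defined group isomorphism.

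For well-definedness and injectivity I would appeal directly to Lemma \ref{tiam}. If $[f]=[g]$ then $((P_1f, P_2f))=f\simeq g=((P_1g, P_2g))$, and the converse direction of Lemma \ref{tiam} gives $P_1f\simeq P_1g$ and $P_2f\simeq P_2g$, so $\Phi([f])=\Phi([g])$. Conversely, if $\Phi([f])=\Phi([g])$, then $P_1f\simeq P_1g$ and $P_2f\simeq P_2g$, and the forward direction of Lemma \ref{tiam} yields $f=((P_1f, P_2f))\simeq((P_1g, P_2g))=g$, i.e. $[f]=[g]$; hence $\Phi$ is one-to-one. Surjectivity is immediate: given $([\alpha],[\beta])$, put $f=((\alpha,\beta))\in C(X, Y\times Z)$, so that $P_1f=\alpha$, $P_2f=\beta$, and $\Phi([f])=([\alpha],[\beta])$.

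To see that $\Phi$ is a homomorphism, I would use that the operation on $Y\times Z$ is coordinatewise, so $P_1$ and $P_2$ are group homomorphisms onto $Y$ and $Z$; this gives, pointwise, $P_1(f\ast g)=P_1f\,*_1\,P_1g$ and $P_2(f\ast g)=P_2f\,*_2\,P_2g$, where $\ast$ denotes the operation on $C(X, Y\times Z)$. Reading off the definitions of the induced operations on each $\pi_1$ and of the product operation $*$ on $\pi_1[C(X, Y)]\times\pi_1[C(X, Z)]$, this chain of equalities produces
\[
\Phi([f]\ast'[g])=\Phi([f\ast g])=([P_1f\,*_1\,P_1g],\,[P_2f\,*_2\,P_2g])=\Phi([f])*\Phi([g]).
\]

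I do not expect a substantive obstacle here: the entire argument is a bookkeeping exercise once Lemma \ref{tiam} is in hand, and that lemma simultaneously delivers both well-definedness and injectivity. The only point requiring care is to keep the three layers of operations distinct when verifying the homomorphism property, namely the operations $*_1,*_2$ on the factor function groups, their induced operations on the respective $\pi_1$ classes, and the product operation $*$ on $\pi_1[C(X, Y)]\times\pi_1[C(X, Z)]$.
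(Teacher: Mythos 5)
Your proposal is correct and is essentially the same argument as the paper's: the same map $[f]\mapsto([P_1f],[P_2f])$, with Lemma \ref{tiam} supplying injectivity via $f=((P_1f,P_2f))$, surjectivity via $((\alpha,\beta))$, and the coordinatewise identity $P_i(f\ast g)=P_if *_i P_ig$ giving the homomorphism property. No substantive differences.
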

\begin{proof}
Define $\phi: \pi_1[C(X, Y\times Z)]\longrightarrow \pi_1[C(X, Y)]\times\pi_1[C(X, Z)]$ by $\phi([f])=([P_1f], [P_2f])$. We claim that $\phi$ is a group isomorphism. First, assume $[f], [g]\in C(X, Y\times Z)$ and $[f]=[g]$. Then $f\simeq g$. This implies $P_1f\simeq P_1g$ and $P_2f\simeq P_2g$. Thus $[P_1f]=[P_1g]$ and $[P_2f]=[P_2g]$. Hence, $\phi([f])=([P_1f], [P_2f])=([P_1g], [P_2g])=\phi([g])$. Let  $f, g \in C(X, Y\times Z)$ and $\bullet$ be the operation on $C(X, Y\times Z)$.  Then 
\begin{align*}
\phi([f]\bullet^{\prime}[g])=\phi([f\bullet g])=([P_1(f\bullet g)], [P_2(f\bullet g)])=
([P_1f*_1 P_1g], [P_2f *_2 P_2g])=
\\
([P_1f]*_1^{\prime} [P_1g], [P_2f]*_2^{\prime} [P_2g])=([P_1f], [P_2f])* ([P_1g], [P_2g])=\phi([f])*\phi([g]).
\end{align*}
Thus $\phi$ is a group homomorphism. Now let $f, g\in C(X, Y\times Z)$. It is easy to see that $f=((P_1f, P_2f))$ and $g=((P_1g, P_2g))$. Let $\phi([f])=\phi([g])$. Then $([P_1f], [P_2f])=([P_1g], [P_2g])$. This implies $P_1f\simeq P_1g$ and $P_2f\simeq P_2g$. By Lemma \ref{tiam}, $f\simeq g$, i.e., $[f]=[g]$. Thus $\phi$ is one-one. Finally, to show $\phi$ is an onto map, consider $([f],[g])\in \pi_1[C(X,Y)]\times\pi_1[C(X, Z)]$. Then we have $((f, g))\in C(X, Y\times Z)$ and 
$\phi([((f, g))])=([P_1((f, g))], [P_2((f, g))])=([f], [g])$.
\end{proof}
For a topological space $X$, the two groups $C(X, S^1\times R)$ and $C(X, S^1)$ may not be isomorphic. However, we can utilize the previous theorem to obtain the following result.
\begin{equation*}
\pi_1[C(X, S^{1}\times\Bbb R)]\cong \pi_1[C(X, S^{1})]\times\pi_1[C(X, \Bbb R)]\cong\pi_1[C(X, S^{1})]\times 0\cong\pi_1[C(X, S^{1})].
\end{equation*}

\section{Ring homotopy}
In this section, we consider a ring of continuous maps of the form $C(X, Y)$, where $X$ is a topological space and $Y$ is a topological ring. In this case, the defined product and sum of elements in $Y$ are inherited by $C(X, Y)$. Therefore, if $f, g \in C(X, Y)$, then both $f\cdot g$ and $f+g$ are elements of $C(X, Y)$.

Moreover, since both the addition ($+$) and multiplication ($\cdot$) operations are continuous on $Y\times Y$ as maps, the map $h: X\longrightarrow Y\times Y$ defined by $h(x) = (f(x), g(x))$ is also continuous. This allows us to conclude that $f\cdot g = \cdot\circ h$ and $f+g = +\circ h$ are continuous maps from $X$ to $Y$, as the composition of continuous maps is continuous.

The next example shows that the topological ring property for $Y$ is not superfluous. 
\begin{exam}
Let $(X, \tau)$ be a non-discrete topological space. Put $Y=P(X)$ (i.e., the power set of $X$). Consider the collection $\mathcal{B}=\{U(B): B\in\tau\}$, where $U(B)=\{A\subset X: A\subset B\}$. Then it is easy to see that $\mathcal{B}$ is a base for a topology on $Y$. Since, for each two $B_1, B_2\in \mathcal{B}$, if $A\in U(B_1)\cap U(B_2)$, then $A\in U(B_1\cap B_2)\subseteq U(B_1)\cap U(B_2)$, and $Y=\bigcup_{\alpha\in S, U_{\alpha}\in\tau}U(U_{\alpha})$, where $X=\bigcup_{\alpha\in S}U_{\alpha}$. So $Y$ is a topological space. On the other hand, $Y$ together with the addition $+=\Delta$ and multiplication $.=\cap$ is a commutative ring. However, $C(X, Y)$ is not a ring. Consider the map $f: X\longrightarrow Y$ by $f(x)= \{x\}$. For each open base element $U(B)$, where $B\in\tau$, we have $f^{-1}(U(B))=B$ which is open in $X$. Thus $f\in C(X, Y)$. Let $C$ be an open set in $X$ and $y\in X\setminus C$ be a non-isolated point. Define $g: X\longrightarrow Y$ by $g(x)=\{y\}$ for each $x\in X$. Then $g\in C(X, Y)$ too.  Thus $(f+g)(x)=(\{x\}\setminus \{y\})\cup (\{y\}\setminus \{x\})$. We have $U(C)$ is an open set in $Y$ and $(f+g)^{-1}(U(C))=\{y\}$, which is not open in $X$. This shows that $f+g\not\in C(X, Y)$.
\end{exam}
\begin{defn}
Let $(A, +, .), (B, +, .)$ be two unitary commutative rings of continuous maps. A bi-homotopy $\phi: A\longrightarrow B$ is called a ring-homotopy if it satisfies the following conditions:
\begin{enumerate}
\item For each $f_1, f_2\in A$, $\phi(f_1+ f_2)\simeq \phi(f_1)+\phi(f_2)$.

\item For each $f_1, f_2\in A$, $\phi(f_1.f_2)\simeq \phi(f_1).\phi(f_2)$.
\end{enumerate}
If there exist two ring-homotopies $\phi: A\longrightarrow B$ and $\psi: B\longrightarrow A$ such that $\phi\psi(h)\simeq h$ for each $h\in B$ and $\psi\phi(h)\simeq h$ for each $h\in A$, respectively, then we say $A$ and $B$ are  ring-homotopic (we denote it by $A\simeq^{r}B$).
\end{defn}
Trivially, if $\phi: A\longrightarrow B$ is a bi-homotopy which also is a ring-homomorphism, then it is a ring-homotopy.

Let us call  $A\subseteq C(X, Y)$ a hom-closed subset if $f\simeq g$ and $g\in A$, then $f\in A$. Evidently, for each $f\in C(X, Y)$, $[f]$ is a hom-closed subset of $C(X, Y)$. Also, the intersection (resp., union) of two hom-closed sets is a hom-closed set. To see an example of a hom-closed ideal in the ring $A$, put $I=\{f\in A: f\simeq 0\}$. Then, it is easy to see that $I$ is a hom-closed ideal of $A$ and it is  a proper ideal  of $A$ \ifif 1 is not homotopic to the zero in $A$. It also is easy to see that $I$ is the smallest hom-closed ideal of $A$.

This is important to mention that if $f_1, g_1$ and $f_2, g_{2}$  are elements of $C(X, Y)$, $f_1\simeq g_1$ and $f_2\simeq g_2$,  then $f_1.f_2\simeq g_1.g_2$ (resp., $f_1+f_2\simeq g_1+g_2$). For, let $F: X\times I\longrightarrow Y$ be a homotopy between $f_1$ and $g_1$ and $G: X\times I\longrightarrow Y$ be a homotopy between $f_2$ and $g_2$. Define $H: X\times I\longrightarrow Y$ by $H(x, t)=F(x, t). G(x, t)$ (resp., $H(x, t)=F(x, t)+G(x, t)$). Then,  trivially $H$ is a homotopy between $f_1.f_2$ (resp., $f_1+f_2$) and $g_1.g_2$ (resp., $g_1+g_2$).
\begin{lem}\label{mary}
The following statements hold.
\begin{enumerate}
\item If there exists a ring-homotopy $\phi$ from $A$ to $B$ and $I$ is an ideal of $A$, then the set $I_{\phi}=\{g\in B: g\simeq \phi(f)$, for some $f\in I\}$ is an ideal of $B$.

\item Let $A\simeq^{r}B$ (i.e., there are two maps $\phi$ and $\psi$ satisfy in the definition $A\simeq^{r} B$) and $I$ be a maximal ideal of $A$. Then $I$ is hom-closed  \ifif $I_{\phi}$ is a maximal ideal of $B$.
\end{enumerate}
\end{lem}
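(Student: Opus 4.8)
The plan is to verify Part (1) by a direct check of the ideal axioms for $I_\phi$, and to reduce Part (2) to a correspondence-theorem argument after transporting everything to the factor rings $\pi_1[A]$ and $\pi_1[B]$, where maximality is transparent. Throughout I would lean on the fact recorded just before the lemma: if $f_1\simeq g_1$ and $f_2\simeq g_2$, then $f_1+f_2\simeq g_1+g_2$ and $f_1\cdot f_2\simeq g_1\cdot g_2$.

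For Part (1), first note that $\phi(0)\simeq\phi(0)+\phi(0)$ forces $\phi(0)\simeq 0$ (cancel in the group $\pi_1[B]$), and similarly $\phi(-f)\simeq-\phi(f)$; hence $0\in I_\phi$. Closure under addition follows from $g_1+g_2\simeq\phi(f_1)+\phi(f_2)\simeq\phi(f_1+f_2)$ with $f_1+f_2\in I$, and closure under negation from $-g\simeq-\phi(f)\simeq\phi(-f)$ with $-f\in I$. The one genuinely new point is multiplicative absorption: given $g\in I_\phi$ (so $g\simeq\phi(f)$, $f\in I$) and an arbitrary $b\in B$, I would invoke the surjectivity clause of the bi-homotopy definition to choose $a\in A$ with $\phi(a)\simeq b$, whence $bg\simeq\phi(a)\phi(f)\simeq\phi(af)$ with $af\in I$, so $bg\in I_\phi$. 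I would also observe that $I_\phi$ is automatically hom-closed, being a union of homotopy classes.

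For Part (2), the structural ingredients are: (a) an ideal $J$ of a ring $R$ of continuous maps is hom-closed if and only if it contains $K_R:=\{h\in R: h\simeq 0\}$ (one direction since $h\simeq 0\in J$, the other since $f\simeq g\in J$ gives $f-g\in K_R\subseteq J$), so the hom-closed ideals of $R$ are exactly the $P_R$-saturated ideals; (b) restricting $\phi$ to the additive groups makes it a group-homotopy, so Theorem~\ref{gol}(2) gives an additive isomorphism $\pi_1(\phi):\pi_1[A]\to\pi_1[B]$, and the multiplicative clause of ring-homotopy upgrades it to a ring isomorphism; and (c) a direct computation gives $P_B(I_\phi)=\pi_1(\phi)(P_A(I))$. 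Combining (a)--(c) with the correspondence theorem for the surjections $P_A,P_B$ yields a maximality-preserving bijection between hom-closed ideals of $A$ and of $B$ sending $I$ to $I_\phi$. Since $I_\phi$ is always hom-closed, ``$I_\phi$ is maximal in $B$'' is equivalent to ``$P_A(I)$ is maximal in $\pi_1[A]$'', which (as $I$ is already maximal) holds exactly when $K_A\subseteq I$, i.e.\ exactly when $I$ is hom-closed. For the contrapositive I would check that if the maximal ideal $I$ is not hom-closed then $I+K_A=A$, forcing $P_A(I)=\pi_1[A]$ and hence $I_\phi=B$, which is not maximal.

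I expect the main obstacle to lie in the bookkeeping of Part (2): pinning down that $\pi_1(\phi)$ is a genuine ring isomorphism (feeding the additive group-homotopy structure into Theorem~\ref{gol}(2) and then adjoining multiplicativity) and threading the correspondence theorem through both quotient maps $P_A$ and $P_B$ so that properness and maximality transport correctly. The one subtle point in Part (1) is recognizing that multiplicative absorption is impossible to verify without the surjectivity clause of the bi-homotopy definition, which is precisely what allows an arbitrary $b\in B$ to be replaced by some $\phi(a)$.
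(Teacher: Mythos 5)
Your Part (1) is essentially the paper's argument: the same direct check of closure under addition, and for multiplicative absorption the same use of the surjectivity clause of the bi-homotopy to replace an arbitrary $b\in B$ by some $\phi(a)$ with $\phi(a)\simeq b$; your additional remarks about $0\in I_\phi$, negation, and hom-closedness of $I_\phi$ are correct and harmless extras. Part (2) is where you genuinely diverge. The paper works entirely at the level of elements: it first shows $\phi(0)\simeq 0$ and $\phi(1)\simeq 1$ (the latter by applying $\psi$ to $\phi(1)(1-\phi(1))\simeq 0$), then in both directions uses the characterization ``$g\notin I$ maximal implies $1-gh\in I$ for some $h$'' together with the computation $\phi(1-gh)\simeq 1-\phi(g)\phi(h)$, deriving a contradiction with $1\in I_\phi$ for the converse. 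You instead pass to the factor rings: hom-closed ideals are exactly those containing $K_R=\{h: h\simeq 0\}$, i.e.\ the saturated ideals for the quotient maps onto $\pi_1[A]$ and $\pi_1[B]$; the additive group-homotopy structure plus Theorem \ref{gol}(2) and the multiplicative clause make $\pi_1(\phi)$ a ring isomorphism; and the identity $P_B(I_\phi)=\pi_1(\phi)(P_A(I))$ lets the correspondence theorem transport maximality. Both arguments are correct, but they buy different things: the paper's is shorter and self-contained, while yours is structurally cleaner and makes explicit the one point the paper's forward direction glosses over, namely properness of $I_\phi$ --- in your setup this is exactly the statement that $I+K_A\neq A$, which for a maximal $I$ is equivalent to hom-closedness, so the role of that hypothesis becomes transparent. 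Your route also delivers, as a by-product, the maximality-preserving bijection between hom-closed maximal ideals that Theorem \ref{had} later constructs by hand.
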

\begin{proof}
(1) Consider two elements $f, g\in I_{\phi}$. Then there are two elements $f_1, g_1\in I$ such that $f\simeq\phi(f_1)$ and $g\simeq\phi(g_1)$. Thus we have \[f+g\simeq\phi(f_1)+\phi(g_1)\simeq\phi(g_1+f_1),\quad \text{where}\quad f_1+g_1\in I.\] This implies $f+g\in I_{\phi}$. Now, let $f\in B$ and $h\in I_{\phi}$. Then there exist $f_1\in A$ and $h_1\in I$ such that $\phi(f_1)\simeq f$ and $\phi(h_1)\simeq h$. Thus $f.h\simeq \phi(f_1).\phi(h_1)\simeq \phi(f_1h_1)$, where $f_1h_1\in I$. This shows $f.h\in I_{\phi}$. Hence  $I_{\phi}$ is an ideal of $B$.

(2) $\Longrightarrow$ First, we claim that $\phi(1_A)\simeq 1_B$ and $\phi(0_A)\simeq 0_B$. Since, we have $\phi(0)=\phi(0+0)\simeq \phi(0)+ \phi(0)$. Thus $\phi(0)\simeq 0$. And, $\phi(1)=\phi(1.1)\simeq \phi(1). \phi(1)$. This implies $\phi(1)(1-\phi(1))\simeq 0$. Thus $\psi(\phi(1)(1-\phi(1)))\simeq \psi(0)\simeq 0$. But the left side is homotopic to the $\psi(1)-1$. Hence $\psi(1)\simeq 1$. This shows that $1\simeq \phi(\psi(1))\simeq\phi(1)$. By Part (1), $I_{\phi}$ is an ideal of $B$. Now let $f\in B$ and $f\not\in I_{\phi}$. Then there exists $g\in A$ such that $\phi(g)\simeq f$. $f\not\in I_{\phi}$ implies $g\not\in I$. By maximality of $I$, there exists $h\in A$ such that $1-gh\in I$. Thus we have \[\phi(1-gh)\simeq 1-\phi(g).\phi(h)\simeq 1-f.\phi(h).\] Thus $1-f.\phi(h)\in I_{\phi}$, where $\phi(h)\in B$. This shows $I_{\phi}$ is a maximal ideal of $B$.

$\Leftarrow$ Let $f\simeq g$ and $f\in I$. If $g\not\in I$, then there exists $h\in A$ such that $1-gh\in I$, by maximality of $I$. This implies that;\[\phi(1-gh)\simeq 1-\phi(g).\phi(h)\simeq 1-\phi(f).\phi(h).\] Thus $1-\phi(f). \phi(h)\in I_{\phi}$. On the other hand, $\phi(f)\in I_{\phi}$. Hence $1\in I_{\phi}$, a contradiction. So we must have $g\in I$.
\end{proof}

The set of all maximal ideals of a ring $A$ with the Zariski topology is a well-known topological space denoted by $\Max(A)$. We denote by $\Max_H(A)$ the subspace of $\Max(A)$ consisting of hom-closed maximal ideals of $A$.
\begin{thm}\label{had}
If $A\simeq^{r}B$, then two spaces $\Max_H(A)$ and $\Max_H(B)$ are homeomorphic.
\end{thm}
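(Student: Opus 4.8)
The plan is to build the homeomorphism directly from the two ring-homotopies supplied by $A\simeq^{r}B$. Fix ring-homotopies $\phi\colon A\longrightarrow B$ and $\psi\colon B\longrightarrow A$ with $\phi\psi(h)\simeq h$ for all $h\in B$ and $\psi\phi(h)\simeq h$ for all $h\in A$. For a hom-closed maximal ideal $I$ of $A$, Lemma \ref{mary}(2) guarantees that $I_{\phi}$ is a maximal ideal of $B$, and $I_{\phi}$ is hom-closed directly from its definition: if $g\simeq\phi(f)$ with $f\in I$ and $g'\simeq g$, then $g'\simeq\phi(f)$, so $g'\in I_{\phi}$. Hence $I\mapsto I_{\phi}$ defines a map $\Phi\colon\Max_H(A)\longrightarrow\Max_H(B)$, and symmetrically $J\mapsto J_{\psi}$ defines $\Psi\colon\Max_H(B)\longrightarrow\Max_H(A)$.

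First I would show $\Phi$ and $\Psi$ are mutually inverse. The key identity is that, for a hom-closed maximal ideal $I$ of $A$ and any $g\in B$, one has $g\in I_{\phi}$ \ifif $\psi(g)\in I$. Indeed, if $g\simeq\phi(f)$ with $f\in I$, then $\psi(g)\simeq\psi\phi(f)\simeq f\in I$, so $\psi(g)\in I$ by hom-closedness of $I$; conversely, if $\psi(g)\in I$, then $\phi(\psi(g))\in I_{\phi}$ and $g\simeq\phi\psi(g)$, whence $g\in I_{\phi}$ since $I_{\phi}$ is hom-closed. Using this characterization (and its symmetric version for $\psi$) together with the homotopy identities, a short check gives $(I_{\phi})_{\psi}=I$ and $(J_{\psi})_{\phi}=J$, so $\Psi\Phi=\mathrm{id}$ and $\Phi\Psi=\mathrm{id}$, and $\Phi$ is a bijection with inverse $\Psi$.

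Next I would verify continuity in the Zariski topology. Writing $V_R(a)=\{M\in\Max(R):a\in M\}$ for the sub-basic closed sets of a ring $R$, the identity above yields, for each $g\in B$,
\[\Phi^{-1}\bigl(V_B(g)\cap\Max_H(B)\bigr)=\{I\in\Max_H(A):\psi(g)\in I\}=V_A(\psi(g))\cap\Max_H(A),\]
which is closed in $\Max_H(A)$. Hence $\Phi$ is continuous, and by the symmetric argument $\Psi$ is continuous. Since $\Phi$ and $\Psi$ are mutually inverse continuous bijections, $\Phi$ is a homeomorphism, and $\Max_H(A)$ and $\Max_H(B)$ are homeomorphic.

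The hard part will not be any single computation but assembling the ingredients correctly: one must know that $I_{\phi}$ actually lands in $\Max_H(B)$ (maximal via Lemma \ref{mary}(2) and hom-closed via construction), and the whole argument hinges on the characterization $g\in I_{\phi}\Leftrightarrow\psi(g)\in I$, which is precisely where the hom-closedness of $I$ and $I_{\phi}$ and the relations $\psi\phi\simeq\mathrm{id}$, $\phi\psi\simeq\mathrm{id}$ are all used simultaneously. Once that characterization is established, both the bijectivity and the continuity of $\Phi$ and $\Psi$ drop out; so the real care lies in confirming that hom-closedness is preserved under $I\mapsto I_{\phi}$ and that the assignment genuinely factors through the hom-closed maximal spectra rather than through all of $\Max$.
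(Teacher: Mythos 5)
Your proposal is correct and follows essentially the same route as the paper: both define the maps $M\mapsto M_{\phi}$ and $M\mapsto M_{\psi}$, invoke Lemma \ref{mary} for maximality, verify the two maps are mutually inverse, and check continuity on (sub)basic sets of the Zariski topology. The only cosmetic difference is that you organize the verification around the characterization $g\in I_{\phi}\Leftrightarrow\psi(g)\in I$ and use $\psi(g)$ where the paper chooses some $h\in A$ with $\phi(h)\simeq g$; since any such $h$ is homotopic to $\psi(g)$, these determine the same subsets of $\Max_H(A)$.
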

\begin{proof} Let $\phi$ and $\psi$ be maps in the definition $A\simeq^{r}B$. Define maps  $f: \Max_H(A)\longrightarrow\Max_H(B)$ and $g: \Max_H(B)\longrightarrow\Max_H(A)$ by $f(M)=M_{\phi}$ and $g(M)=M_\psi$, respectively. First, we show $f$ is well-defined, for $g$ is similar to it. By Lemma \ref{mary}, for each $M\in\Max_H(A)$, we have $f(M)\in\Max_H(B)$. Let $M=N\in \Max_H(A)$ and $f\in M_{\phi}$. Then $f\simeq \phi(g)$ for some $g\in M=N$. This implies $f\in N_{\phi}$, i.e., $M_\phi\subseteq N_\phi$. Similarly, we can see that $N_\phi\subseteq M_\phi$. Thus $\phi(M)=\phi(N)$. Moreover, for each $M\in\Max_H(B)$ we have, \[(f\circ g)(M)=f(g(M))=f(M_\psi)=(M_\psi)_{\phi}=\{g\in B: g\simeq\phi(h), h\in M_\psi\}=\]\[\{g\in B: g\simeq\phi(h), h\simeq\psi(h_1), h_1\in M\}=\{g\in B: g\simeq \phi(\psi(h_1))\simeq h_1, h_1\in M\}=M.\]
Similarly, we can see that for each $M\in\Max_H(A)$, $(g\circ f)(M)=M$. It remains to show that $f$ and $g$ are continuous. We show it for $f$, for $g$ is similar to that. Consider an open base element $D(g)$ in $\Max_H(B)$, where $g\in B$. There exists $h\in A$ such that $\phi(h)\simeq g$. We claim that $f^{-1}(D(g))=D(h)$. Hence this is open. To prove our claim, first, we assume $M\in f^{-1}(D(g))$ and $h\in M$.  Then $f(M)=M_\phi\in D(g)$ and $\phi(h)\in M_\phi$. This implies $g\in M_\phi$, since $M_\phi$ is hom-closed. That is $g\in f(M)$, a contradiction. This shows $f^{-1}(D(g))\subseteq D(h)$. To see another inclusion, let $M\in D(h)$ and $g\in M_\phi$ (i.e., $M\not\in f^{-1}(D(g))$). Then $\phi(h)\in M_\phi$. Thus $\phi(h)\simeq\phi(k)$, for some $k\in M$. Thus $h\simeq K$ and hence $h\in M$. This is a contradiction. So we are done.
\end{proof}
\begin{exam}
Let $X$ and $Y$  be two topological spaces and $Z$ be a topological ring. If $X, Y$ are two equivalent spaces and $A=C(X, Z), B=C(Y, Z)$, then $A\simeq B$, by Proposition \ref{ali}. Now, it is easy to see that the defined bi-homotopies between these two rings (i.e., $\psi$ and $\phi$ in the proof of Proposition \ref{ali}) are ring-homotopies, hence $A\simeq^r B$. Thus, by Theorem  \ref{had}, we have $\Max_H(A)$ and $\Max_H(B)$ are two homeomorphism spaces. 
\end{exam}
If $(A, +, .)$ is a ring of continuous maps, then it is easy to see that $\pi_1[A]$ with two operations $[f]+^{\prime} [g]=[f+g]$ and $[f].^{\prime} [g]=[f.g]$  is a ring. Let $I$ be an ideal of $A$.  Put
\begin{equation*}
I_h={f\in A: f\simeq g \quad \text{for some} \quad g\in I}.
\end{equation*}
Then, it is easy to see that $I_h$ is a hom-closed ideal of $A$ and $I\subseteq I_h$. If $J$ is a hom-closed ideal of $A$ containing $I$, and $f\in I_h$, then $f\simeq g$ for some $g\in I$. Thus, $g\in J$, and hence $f\in J$. This shows that $I_h$ is the smallest hom-closed ideal containing $I$.
The following result is easy to prove.
\begin{prop}
Let $I$ and $J$ be two ideals of the ring $A$.
\begin{enumerate}
\item If $I\subseteq J$, then $I_h\subseteq J_h$.
\item $(I+J)_h=I_h+J_h$.
\item $I$ is a hom-closed ideal of $A$ if and only if $I=I_h$.
\item The sum of two hom-closed ideals in $A$ is a hom-closed ideal in $A$.
\end{enumerate}
\end{prop}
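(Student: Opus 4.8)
The plan is to treat parts (1) and (3) as direct consequences of the definitions, deduce (4) from (2) and (3), and concentrate the actual work on the nontrivial inclusion in (2). Throughout I would lean on the two facts already recorded just before Lemma~\ref{mary}: that $f_1\simeq g_1$ together with $f_2\simeq g_2$ forces $f_1+f_2\simeq g_1+g_2$ and $f_1\cdot f_2\simeq g_1\cdot g_2$, and that $I_h$ is the smallest hom-closed ideal of $A$ containing $I$ (in particular $I\subseteq I_h$ and $I_h$ is hom-closed).

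For (1), I would simply take $f\in I_h$, so that $f\simeq g$ for some $g\in I$; since $I\subseteq J$ we have $g\in J$, whence $f\in J_h$ by definition. Equivalently, $J_h$ is a hom-closed ideal containing $I$, so minimality of $I_h$ gives $I_h\subseteq J_h$. For (3), the forward direction is immediate since $I_h$ is hom-closed, so $I=I_h$ is hom-closed. Conversely, if $I$ is hom-closed then $I\subseteq I_h$ always holds, and for the reverse inclusion I would take $f\in I_h$, write $f\simeq g$ with $g\in I$, and invoke hom-closedness of $I$ to conclude $f\in I$; hence $I=I_h$.

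The main step is (2). One inclusion is cheap: from $I\subseteq I+J$ and $J\subseteq I+J$, part (1) gives $I_h\subseteq (I+J)_h$ and $J_h\subseteq (I+J)_h$, and since $(I+J)_h$ is an ideal (hence closed under sums) it follows that $I_h+J_h\subseteq (I+J)_h$. The reverse inclusion is where care is needed. Given $f\in (I+J)_h$, I would choose $h\in I+J$ with $f\simeq h$ and write $h=a+b$ with $a\in I$ and $b\in J$. The natural decomposition is $f=a+(f-a)$, where $a\in I\subseteq I_h$. To see that $f-a\in J_h$, I would use that homotopy respects addition: from $f\simeq a+b$ and $-a\simeq -a$ we obtain $f-a\simeq (a+b)-a=b\in J$, so indeed $f-a\in J_h$ (note $f-a\in A$ because $A$ is a ring). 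Thus $f\in I_h+J_h$, completing (2). This reliance on homotopy being compatible with the additive (and subtractive) structure is the one point I expect to require explicit justification; everything else is bookkeeping.

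Finally, (4) drops out by combining the previous parts: if $I$ and $J$ are hom-closed then $I=I_h$ and $J=J_h$ by (3), so (2) yields $(I+J)_h=I_h+J_h=I+J$, and a second application of (3) shows that $I+J$ is hom-closed.
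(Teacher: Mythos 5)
Your proof is correct. The paper itself gives no argument here --- it dismisses the proposition with ``The following result is easy to prove'' --- so there is nothing to compare against; your write-up simply supplies the omitted details. The one genuinely nontrivial point, the inclusion $(I+J)_h\subseteq I_h+J_h$ in part (2), is handled correctly: decomposing $f\simeq a+b$ as $f=a+(f-a)$ and using the recorded fact that homotopy is compatible with addition (applied to $f\simeq a+b$ and $-a\simeq -a$) to get $f-a\simeq b\in J$ is exactly the right move, and the remaining parts follow from the minimality characterization of $I_h$ and from combining (2) with (3) as you indicate.
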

Define the map $\phi: A\longrightarrow\pi_1[A]$ by $\phi(f)=[f]$. It is easy to see that $\phi$ is an epimorphism (surjective ring-homomorphism). Thus, $A/\text{ker}\ \phi\cong\pi_1[A]$. This shows that $\pi_1[A]$ is a factor ring of $A$.

\begin{exam}
(1) Let $X$ be a completely regular Hausdorff space, and $\Bbb R$ be the real numbers with the standard topology. It is well-known that $C(X, \Bbb R)$ is a ring (see \cite{GJ}), and each $f\in C(X, \Bbb R)$ is homotopic to zero. Thus, we have $\pi_1[C(X, \Bbb R)]={[0]}$. On the other hand, since $1$ is homotopic to the zero in this ring, the only hom-closed ideal is the ring itself.

(2) Let $Y$ be a topological ring that is not a contractible space (e.g., $\Theta W_{\text{Ring}}(\Bbb Z/(2))$, see Example 11 in \cite{R} and \cite{T}), and let $X$ be any space. Then there exists an $f\in C(X, Y)$ that is not homotopic to the constant function $0$. This implies $\pi_1[C(X, Y)]$ is a non-zero ring.
\end{exam}
Now, we want to present the form of ideals (resp., prime ideals and maximal ideals) of ring $\pi_1[A]$.

\begin{prop}
Let $A$ be a ring of continuous maps.
\begin{enumerate}
\item Every ideal of $\pi_1[A]$ is of the form $\pi_1[I]$, where $I$ is a hom-closed ideal of $A$.
\item Every prime ideal of $\pi_1[A]$ is of the form $\pi_1[P]$, where $P$ is a hom-closed prime ideal of $A$.
\item Every maximal ideal of $\pi_1[A]$ is of the form $\pi_1[M]$, where $M$ is a hom-closed maximal ideal of $A$.
\end{enumerate}
\end{prop}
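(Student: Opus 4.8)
The plan is to reduce all three statements to the standard lattice (correspondence) theorem for a quotient ring. Recall from the discussion immediately preceding the proposition that the assignment $f\mapsto [f]$ defines a surjective ring homomorphism $\phi: A\longrightarrow \pi_1[A]$ whose kernel is $K=\{f\in A: f\simeq 0\}$, the smallest hom-closed ideal of $A$. By the correspondence theorem applied to $\pi_1[A]\cong A/K$, the ideals of $\pi_1[A]$ are in inclusion-preserving bijection with the ideals of $A$ that contain $K$, via $J\mapsto\phi^{-1}(J)$ in one direction and $I\mapsto\phi(I)=\pi_1[I]$ in the other; moreover this bijection carries prime ideals to prime ideals and maximal ideals to maximal ideals. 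Hence it suffices to identify the ideals of $A$ containing $K$ with the hom-closed ideals of $A$.

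The key step is therefore the claim that an ideal $I$ of $A$ contains $K$ if and only if $I$ is hom-closed. If $I$ is hom-closed and $f\in K$, then $f\simeq 0$ with $0\in I$, so $f\in I$; thus $K\subseteq I$. Conversely, suppose $K\subseteq I$ and let $f\simeq g$ with $g\in I$. Using the fact established above that homotopy is compatible with addition (applied to $f\simeq g$ together with the reflexive $-g\simeq -g$), we obtain $f-g=f+(-g)\simeq g+(-g)=0$, so $f-g\in K\subseteq I$; then $f=(f-g)+g\in I$, proving $I$ is hom-closed.

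With this identification in hand, Part (1) is immediate: given an ideal $J$ of $\pi_1[A]$, put $I=\phi^{-1}(J)$. Then $I$ is an ideal of $A$ containing $K$, hence hom-closed by the claim, and $J=\phi(\phi^{-1}(J))=\phi(I)=\pi_1[I]$ since $\phi$ is surjective. For Part (2) and Part (3) I would invoke the standard fact that the preimage of a prime (resp.\ maximal) ideal under a surjective ring homomorphism is again a prime (resp.\ maximal) ideal, necessarily containing the kernel: if $J$ is prime (resp.\ maximal) in $\pi_1[A]$, then $P=\phi^{-1}(J)$ (resp.\ $M=\phi^{-1}(J)$) is a hom-closed prime (resp.\ maximal) ideal of $A$ with $J=\pi_1[P]$ (resp.\ $J=\pi_1[M]$).

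The only substantive point is the equivalence ``hom-closed if and only if containing $K$''; once it is established, the three parts follow formally from the correspondence theorem, so I do not anticipate any real obstacle beyond checking that the image $\phi(I)$ literally coincides with the set $\pi_1[I]$ appearing in the statement, which is immediate from the definition $\pi_1[I]=\{[f]:f\in I\}$.
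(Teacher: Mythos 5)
Your proposal is correct, and it reaches the conclusion by a genuinely different (and cleaner) route than the paper. Both arguments ultimately take the preimage $I=\phi^{-1}(J)=\{f\in A:[f]\in J\}$, but the paper then verifies everything by hand: it checks directly that this $I$ is a hom-closed ideal with $\pi_1[I]=J$, and in Parts (2) and (3) it re-proves primality and maximality of $I$ from scratch using hom-closedness. You instead isolate a single key lemma --- an ideal of $A$ is hom-closed if and only if it contains $K=\ker\phi=\{f\in A: f\simeq 0\}$ --- and then outsource all three parts to the correspondence theorem for the surjection $A\twoheadrightarrow A/K\cong\pi_1[A]$. Your lemma is proved correctly: one direction is the paper's own remark that $K$ is the smallest hom-closed ideal, and the converse follows from $f\simeq g\Rightarrow f-g\simeq 0$, which you rightly deduce from the compatibility of homotopy with addition established just before the proposition. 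What your approach buys is more than the stated proposition: you get the full inclusion-preserving bijection between ideals of $\pi_1[A]$ and hom-closed ideals of $A$, respecting primeness and maximality, rather than only the surjectivity of $I\mapsto\pi_1[I]$ onto the three classes of ideals. What the paper's hands-on approach buys is self-containedness and an explicit display of where hom-closedness is used (e.g.\ $[f]\in\pi_1[P]\Rightarrow f\in P$). There is no gap in your argument; the only point worth stating explicitly, which you do flag, is that $\phi(I)$ literally equals $\pi_1[I]$ as defined in the paper.
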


\begin{proof}
(1) Let $J$ be an ideal of $\pi_1[A]$. Put $I=\{f\in A: [f]\in J\}$. We claim that $I$ is a hom-closed ideal of $A$, and $\pi_1[I]=J$.

Trivially, $I$ is a hom-closed subset of $A$. Let $f, g\in I$. Then $[f], [g]\in J$, and since $J$ is an ideal, $[f] +^{\prime} [g] = [f+g] \in J$. This implies $f+g \in I$. Now, consider $g\in A$ and $f\in I$. Then $[g]\in \pi_1[A]$ and $[f]\in J$. Thus, $[f.g] = [f] .^{\prime} [g] \in J$. Therefore, $fg \in I$. Hence, we have shown that $I$ is an ideal of $A$.
It is easy to see that $\pi_1[I]$ is an ideal of $\pi_1[A]$ and $J\subseteq \pi_1[I]$. Let $[f]\in \pi_1[I]$. Then $f\in I$ and hence $[f]\in J$. This shows that $\pi_1[I] \subseteq J$. Thus, $\pi_1[I] = J$.

(2) Let $Q$ be a prime ideal of $\pi_1[A]$. By Part 1, $Q = \pi_1[P]$ for some hom-closed ideal $P$ of $A$. To show that $P$ is a prime ideal, consider $f, g\in A$ with $f.g \in P$. Then $[f] .^{\prime} [g] = [f.g] \in \pi_1[P] = Q$. Thus $[f] \in Q$ or $[g] \in Q$, since $Q$ is a prime ideal. This shows that $f\in P$ or $g\in P$, since $P$ is hom-closed. Therefore, $P$ is a prime ideal of $A$.

(3) Let $N$ be a maximal ideal of $\pi_1[A]$. By Part 1, $N = \pi_1[M]$ for some hom-closed ideal $M$ of $A$. To show that $M$ is a maximal ideal of $A$, consider $f\not\in M$. Then $[f] \not\in \pi_1[M] = N$, since $M$ is hom-closed. By the maximality of $N$, there is $g\in M$ such that $[1-fg] = 1 - [f.g] = [1] +^{\prime} (-[f]) .^{\prime} [g] \in N = \pi_1[M]$. This implies $1 - fg \in M$, i.e., $M$ is a maximal ideal of $A$.
\end{proof}

\end{document}

$\Leftarrow$ Let $\theta$ be a group isomorphism from $\pi_{1}(X, x_{0})$ onto $\pi_{1}(Y, y_{0})$. Put $\phi: L_1(X, x_0)\longrightarrow L_1(Y, y_0)$  by $\phi(f)=g$, where, $\theta([f])=[g]$ and $\psi: L_1(Y, y_0)\longrightarrow L_1(X, x_0)$,  by $\psi(g)=f$, where $\theta^{-1}([g])=[f]$. We claim that $\phi$ and $\psi$ are two bi*-homotopies and we have $\psi\phi(h)\simeq h$ for each $h\in L_1(X, x_0)$ and $\psi\phi(h)\simeq h$ for each $h\in L_1(Y, y_0)$. We prove $\phi$ is a bi*-homotopy, for $\psi$ is similar to that. First assume $f_1, f_2\in L_1(X, x_0)$ and $f_1\simeq f_2$. Then $[f_1]=[f_2]$ and hence $[g_1]=\theta([f_1])=\theta([f_2])=[g_2]$. This implies $\phi(f_1)=g_1\simeq g_2=\phi(f_2)$. Next, let $\phi(f_1)=\phi(f_2)$. Then
$[f_1]=\theta^{-1}\theta([f_1])=\theta^{-1}\theta([f_2])=[f_2]$. Thus $f_1\simeq f_2$. Consider $k\in L_1(Y, y_0)$. Then there exists $f\in L_1(X, x_0)$ such that $\theta([f])=[k]$. Thus $\phi(f)=k$. Now let $f_1, f_2\in L_1(X, x_0)$. Then we have $\phi(f_1)=g_1$ and $\phi(f_2)=g_2$, where $\theta([f_1])=[g_1]$ and $\theta([f_2])=[g_2]$, respectively. Hence  $\theta([f_1 * f_2])=\theta([f_1]o[f_2])=\theta([f_1])o \theta([f_2])=[g_1]o[g_2]=[g_1 * g_2]$. This implies $\phi(f_1 * f_2)=g_1 * g_2=\phi(f_1)*\phi(f_2)$.